\newtheorem{theorem}{Theorem}[section]
\newtheorem{proposition}[theorem]{Proposition}
\newtheorem{lemma}[theorem]{Lemma}
\newtheorem{corollary}[theorem]{Corollary}
\theoremstyle{definition}
\newtheorem{definition}[theorem]{Definition}
\newtheorem{example}[theorem]{Example}
\newtheorem{note}[theorem]{Note}
\theoremstyle{remark}
\newtheorem{remark}[theorem]{Remark}
\numberwithin{equation}{section}
\begin{document}
	
	\title[Weyl's theorem]{Weyl's theorem for commuting tuple of paranormal  and $\ast$-paranormal operators}
	
	\author{Neeru Bala}
	\address{Department of Mathematics, Indian Institute of Technology - Hyderabad, Kandi, Sangareddy, Telangana, India 502 285.}
	\email{ma16resch11001@iith.ac.in}
	
	\author{G. Ramesh}
	\address{Department of Mathematics, Indian Institute of Technology - Hyderabad, Kandi, Sangareddy, Telangana, India 502 285.}
	\email{rameshg@iith.ac.in}
	\subjclass[2000]{47A10, 47A13, 47A53, 47A60, 47B20}
	
	\date{\today}
	
	\keywords{Paranormal operator, Taylor invertible, Taylor Fredholm, Weyl's theorem}
	
	\begin{abstract}
	In this article, we show that a commuting pair $T=(T_1,T_2)$ of $\ast$-paranormal operators $T_1$ and $T_2$ with quasitriangular property satisfy the Weyl's theorem-I, that is $$\sigma_T(T)\setminus\sigma_{T_W}(T)=\pi_{00}(T)$$ and a commuting pair of paranormal operators satisfy Weyl's theorem-II, that is $$\sigma_T(T)\setminus\omega(T)=\pi_{00}(T),$$
	where $\sigma_T(T),\, \sigma_{T_W}(T),\,\omega(T)$ and $\pi_{00}(T)$ are the Taylor spectrum, the Taylor Weyl spectrum, the joint Weyl spectrum and the set consisting of isolated eigenvalues of $T$ with finite multiplicity, respectively.
	Moreover, we prove that Weyl's theorem-II holds for $f(T)$, where $T$ is a commuting pair of paranormal operators and $f$ is an analytic function in a neighbourhood of $\sigma_T(T)$.
	\end{abstract}
	
	\maketitle
\section{Introduction}
The aim of this article is two-fold. Firstly, we prove that a commuting pair of $\ast$-paranormal operators satisfy Weyl's theorem-I. Secondly, we show that for a commuting pair of paranormal operators $T_1$ and $T_2$, $f(T_1,T_2)$ satisfy Weyl's theorem-II for every analytic function $f$ in a neighbourhood of the Taylor spectrum of $T$.

Let $H$ denote an infinite dimensional complex Hilbert space and $\mathcal{B}(H)$ denote the space of all bounded linear operators on $H$.  If $A\in\mathcal{B}(H)$, then the spectrum of $A$ is defined by $$\sigma(A)=\{\lambda\in\mathbb{C}:A-\lambda I\text{ is not invertible in }\mathcal{B}(H)\}.$$ Recall that $A$ is said to be \textit{normal}, if $A^*A=AA^*$ and \textit{hyponormal}, if $AA^*\leq A^*A$ or equivalently $\|A^*x\|\leq\|Ax\|$, for every $x\in H$.
An operator $A\in\mathcal{B}(H)$ is said to be $\ast$\textit{-paranormal}, if
\begin{equation*}
\|A^*x\|^2\leq\|A^2x\|\|x\|,\,\forall\, x\in H
\end{equation*}
and it is said to be \textit{paranormal}, if
\begin{equation*}
\|Ax\|^2\leq\|A^2x\|\|x\|, \forall\,x\in H.
\end{equation*}
We have the following inclusion relation among these classes.
\begin{equation*}
\text{Normal}\subseteq\text{ Hyponormal }\subseteq\text{ Paranormal.}
\end{equation*}
Also Hyponormal $\subseteq\ast$-Paranormal, but the classes of $\ast$-paranormal and paranormal operators are not related to each other.

We say $A$ satisfy Weyl's theorem if $\sigma(A)\setminus\omega(A)=\pi_{00}(A)$, where $\omega(A)$ is Weyl spectrum for $A$ and $\pi_{00}(A)$ is the set of all isolated eigenvalues of $A$ with finite multiplicity. Uchiyama \cite{UCHIYAMAp} proved that a paranormal operator satisfies Weyl's theorem and later Uchiyama and Tanahashi \cite{UCHIYAMA} proved that a $\ast$-paranormal operator satisfy Weyl's theorem.

In this article, we study Weyl theorems for commuting tuple of operators.

Let $T=(T_1,T_2,\ldots T_n)$ be a commuting $n$-tuple of operators in $\mathcal{B}(H)$ and $E^n[e]=\left\{E_k^n[e]\right\}_{k=0}^n$ be an exterior algebra generated by indeterminates $e_1,e_2,\ldots e_n$, where $e_i\wedge e_j=-e_j\wedge e_i$. For a Hilbert space $H$, we denote $E^n(H)=H\otimes E^n[e]$. Define $\delta_k:E_k^n(H)\rightarrow E_{k-1}^n(H)$ by
\begin{equation*}
\delta_k(x\otimes e_{i_1}\wedge e_{i_2}\wedge\ldots e_{i_k})=\underset{j=1}{\overset{k}{\sum}}(-1)^{j-1}T_jx\otimes e_{i_1}\wedge e_{i_2}\wedge\ldots\wedge \hat{e}_{i_j}\wedge\ldots e_{i_k}.
\end{equation*}
Then the \textit{Koszul complex} $E(H,T)$ for $T$ is represented by
\begin{equation*}
0\rightarrow E_n^n(H)\xrightarrow{\delta_n}E_{n-1}^n(H)\xrightarrow{\delta_{n-1}}\cdots\rightarrow E_1^n(H)\xrightarrow{\delta_1}E_0^n(H)\rightarrow 0.
\end{equation*}
For $n=2$, the Koszul complex for $T=(T_1,T_2)$ is given by
\begin{equation*}
0\rightarrow H\xrightarrow{\delta_2}H\oplus H\xrightarrow{\delta_1} H\rightarrow 0,
\end{equation*}
where $\delta_2(x)=(-T_2x,T_1x)$ and $\delta_1(x,y)=T_1x+T_2y$, for $x,y\in H$.

The $k$-th homology group for the Koszul complex $E(H,T)$ is defined by
\begin{equation*}
H_k(E(H,T))=\text{Kernel }\delta_k/\text{Range }\delta_{k+1},\text{ for }k=0,1,\ldots n.
\end{equation*}
Then $T=(T_1,T_2,\ldots T_n)$ is said to be \textit{Taylor invertible}, if $H_k(E(H,T))=\{0\}$ and is said to be \textit{Taylor Fredholm}, if $H_k(E(H,T))$ is finite dimensional for every $k=0,1,\ldots n$. In the latter case, $\text{ind}(T)=\underset{k=1}{\overset{n}{\sum}}(-1)^k\text{dim}H_k\left(E(H,T)\right).$

The \textit{Taylor spectrum} is defined by
\begin{align*}
\sigma_T(T):=\{\lambda\in\mathbb{C}^n:T-\lambda I\text{ is not Taylor invertible}\}.
\end{align*}
An element $\lambda=(\lambda_1,\lambda_2,\ldots\lambda_n)\in\mathbb{C}^n$ is called a \textit{joint eigenvalue} of $T$, if there exist a non-zero vector $x\in H$ such that $(T_i-\lambda_i I)x=0$, for all $i=1,2,\ldots n$. We denote the set of all joint eigenvalues of $T$ by $\sigma_p(T)$ and the set of all isolated joint eigenvalues of finite multiplicity by $\pi_{00}(T)$. Note that for a single operator $A\in\mathcal{B}(H)$, the Taylor spectrum coincides with the usual spectrum of $A$.

An operator $A\in \mathcal B(H)$ is called \textit{finite rank}, if $R(A)$ is finite dimensional and $A$ is called \textit{compact}, if $A$ maps every bounded set in $H$ to a pre-compact set in $H$. We denote the space of all finite rank operators and compact operators in $\mathcal{B}(H)$ by $\mathcal{F}(H)$ and $\mathcal{K}(H)$, respectively.

For a commuting $n$-tuple of operators in $\mathcal{B}(H)$, there are two notions of Weyl spectrum. The \textit{joint Weyl spectrum},
\begin{equation*}
\begin{split}
\omega(T)=\cap\{\sigma_T(T+K):&K=(K_1,K_2,\ldots K_n),\text{ where }K_i\in\mathcal{K}(H)\\
&\text{ and }T+K\text{ is a commuting $n$-tuple}\}
\end{split}
\end{equation*}
and the \textit{Taylor Weyl spectrum},
\begin{equation*}
\sigma_{T_W}(T):=\{\lambda\in\mathbb{C}^n:T-\lambda I\text{ is not Taylor Fredholm of index 0}\}.
\end{equation*}
Subsequently, Han and Kim \cite{KIM} discussed two notions of Weyl's theorem. We say $T$ satisfy
\begin{enumerate}
	\item the \textit{Weyl's theorem-I}, if $$\sigma_T(T)\setminus\sigma_{T_W}(T)=\pi_{00}(T).$$
	\item the \textit{Weyl's theorem-II}, if $$\sigma_T(T)\setminus\omega(T)=\pi_{00}(T).$$
\end{enumerate}
  If $T_i$'s are normal operators, then $\omega(T)=\sigma_{T_W}(T)$ and consequently the Weyl's theorem-I agrees with the Weyl's theorem-II. Note that $\sigma_{T_W}(T)=\omega(T)$, if $n=1$. So Weyl's theorem-I is same as Weyl's theorem-II for $n=1$.

Cho \cite{CHOIII} proved that a commuting $n$-tuple of normal operators satisfy the Weyl's theorem-II, which is equivalent to say that it satisfies the Weyl's theorem-I. Han and Kim \cite{KIM} considered a doubly commuting $n$-tuple of hyponormal operators and showed that if the $n$-tuple has quasitriangular property, then it satisfies the Weyl's theorem-I. Relaxing the double commuting condition Chavan and Curto \cite{CHAVAN} showed that a commuting pair of hyponormal operators with quasitriangular property satisfy the Weyl's theorem-I.

In this article, we consider a commuting pair of $\ast$-paranormal operators and prove that if the commuting pair has quasitriangular property, then it satisfies the Weyl's theorem-I. Later we observe that if $(T_1,T_2)$ is a commuting pair of paranormal operators, then it satisfy Weyl's theorem-II. In this case, we have a more stronger result, which states that $f(T_1,T_2)$ also satisfy Weyl's theorem-II for every analytic function $f$ in a neighbourhood of $\sigma_T(T_1,T_2)$.

This article is divided into four sections. Section $2$ consists of some known results, which we use frequently in later sections. In section $3$ we examine the spectral properties of commuting tuple of $\ast$-paranormal operators and prove the Weyl's theorem for a commuting pair of $\ast$-paranormal operators with the quasitriangular property. In section $4$ we discuss the Weyl's theorem-II for $f(T_1,T_2)$, where $(T_1,T_2)$ is a commuting pair of paranormal operators and $f$ is an analytic function in a neighbourhood of $\sigma_T(T_1,T_2)$.

\section{Preliminaries}
In this section, we quote some relevant material from the spectral theory of commuting $n$-tuple of operators in $\mathcal{B}(H).$

For $A\in\mathcal{B}(H)$, the null space and range space of $A$ are denoted by $N(A)$ and $R(A)$, respectively.
\begin{definition}
	A commuting $n$-tuple $T=(T_1,T_2,\ldots T_n)$ in $\mathcal{B}(H)$ has \textit{quasitriangular property}, if $\text{dim}\underset{i=1}{\overset{n}{\cap}}N(T_i-\lambda_iI)\geq\text{dim}\underset{i=1}{\overset{n}{\cap}}N(T_i-\lambda_i I)^*$, for every $\lambda=(\lambda_1,\lambda_2,\ldots\lambda_n)\in\sigma_T(T)$.
\end{definition}
This is a generalization of upper triangular matrices. Note that a commuting $n$-tuple of normal operators have quasitriangular property.

Vasilescu \cite{VASILESCU}, gave the following characterization for the Taylor spectrum of a commuting pair of operators. This is very useful from the application point of view.
\begin{theorem}\cite{VASILESCU}
	Let $T=(T_1,T_2)$ be a commuting pair in $\mathcal{B}(H)$. Then $T$ is Taylor invertible if and only if the operator
	\begin{equation}\label{ Vas matrix representation }
	\hat{T}=
	\begin{bmatrix}
	T_1&-T_2^*\\
	T_2&T_1^*
	\end{bmatrix}
\end{equation}
	is invertible on $H\oplus H$.
\end{theorem}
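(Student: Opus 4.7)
The plan is to recognise $\hat{T}$ as the ``Dirac-type operator'' naturally attached to the Koszul complex of $(T_1,T_2)$. I would introduce the auxiliary operator $D \colon H \oplus H \to H \oplus H$ defined by
\[
D(a,b) := \delta_1^*(a) + \delta_2(b) = (T_1^* a - T_2 b,\ T_2^* a + T_1 b).
\]
A direct matrix check shows that $\hat{T}$ is unitarily equivalent to $D$ via a coordinate swap composed with a sign flip on the second coordinate; hence invertibility of $\hat{T}$ is equivalent to invertibility of $D$, and the theorem reduces to proving that $D$ is invertible if and only if the Koszul complex is exact at all three positions.

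The structural input I would exploit is the identity $\delta_1\delta_2 = 0$, which is exactly the commutativity $T_1 T_2 = T_2 T_1$ and forces the orthogonality $\operatorname{ran}(\delta_1^*) \perp \operatorname{ran}(\delta_2)$ in $H \oplus H$. As a useful by-product, expanding $\hat{T}^*\hat{T}$ using commutativity produces the block diagonal
\[
\hat{T}^*\hat{T} = \begin{bmatrix} T_1^*T_1+T_2^*T_2 & 0 \\ 0 & T_1 T_1^*+T_2 T_2^* \end{bmatrix} = \begin{bmatrix} \delta_2^*\delta_2 & 0 \\ 0 & \delta_1\delta_1^* \end{bmatrix},
\]
which makes it transparent that $\hat{T}$ invertible forces $\delta_2$ to be bounded below (so $H_2 = 0$) and $\delta_1$ surjective (so $H_0 = 0$).

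For the direction ``$\hat{T}$ invertible $\Rightarrow$ Taylor invertible'', once $H_0 = H_2 = 0$ has been extracted from the block-diagonal formula above, the remaining step is $\ker\delta_1 = \operatorname{ran}(\delta_2)$. Given $u \in \ker\delta_1$, I would use surjectivity of $D$ to write $u = \delta_1^* a + \delta_2 b$; applying $\delta_1$ and invoking $\delta_1\delta_2 = 0$ yields $\delta_1\delta_1^* a = 0$, hence $\delta_1^* a = 0$ and $u = \delta_2 b \in \operatorname{ran}(\delta_2)$.

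For the direction ``Taylor invertible $\Rightarrow$ $\hat{T}$ invertible'', exactness of the Koszul complex gives $H \oplus H = \operatorname{ran}(\delta_1^*) \oplus \operatorname{ran}(\delta_2)$: the summands are orthogonal by the structural fact, they are closed (since $\delta_1$ is surjective and $\operatorname{ran}(\delta_2) = \ker\delta_1$), and together they exhaust $H \oplus H$. This makes $D$ surjective; its injectivity follows from the same orthogonality combined with injectivity of $\delta_2$ and of $\delta_1^*$. The main work here is really bookkeeping---setting up the identification between $\hat{T}$ and $D$ and carefully using $\delta_1\delta_2 = 0$ to split the middle position---so no input beyond commutativity is actually needed.
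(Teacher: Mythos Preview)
The paper does not supply its own proof of this statement: it is quoted in the Preliminaries section as a known result of Vasilescu, with a citation and no argument. So there is no in-paper proof to compare your proposal against.

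That said, your argument is correct and is essentially the standard route (and close to Vasilescu's own). The identification of $\hat{T}$ with the ``Dirac operator'' $D=\delta_1^*+\delta_2$ via the unitary $SJ$ (swap followed by sign flip) is valid, the block-diagonal computation of $\hat{T}^*\hat{T}$ is right and uses exactly the commutativity $T_1T_2=T_2T_1$, and both implications are handled cleanly: in the forward direction the key step $\ker\delta_1\subseteq\operatorname{ran}\delta_2$ via $\delta_1\delta_1^*a=0\Rightarrow\delta_1^*a=0$ is the crux, and in the reverse direction the orthogonal splitting $H\oplus H=\operatorname{ran}\delta_1^*\oplus\operatorname{ran}\delta_2$ (using $\operatorname{ran}\delta_1^*=(\ker\delta_1)^\perp$ from surjectivity of $\delta_1$, and $\operatorname{ran}\delta_2=\ker\delta_1$ from exactness) gives bijectivity of $D$ immediately. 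One tiny clarification you might add: in the reverse direction, to conclude $D$ is \emph{boundedly} invertible (not merely bijective) you are implicitly using that both summands are closed, so that the inverse is bounded by the open mapping theorem; you note the closedness but it is worth saying explicitly why it matters.
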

The following result yields that the Taylor joint spectrum has projection property.
\begin{lemma}\cite[Lemma 3.1]{TAYLOR}
	Let $T=(T_1,T_2,\ldots T_n)$ be a commuting $n$-tuple of operators in $\mathcal{B}(H)$. If $\pi_i:\mathbb{C}^n\rightarrow\mathbb{C}$ is the projection on the i-th coordinate, then $\pi_i(\sigma_T(T_1,T_2,\ldots T_n))=\sigma(T_i)$ for $1\leq i\leq n$.
\end{lemma}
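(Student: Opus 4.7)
The plan is to prove the two inclusions $\pi_i(\sigma_T(T))\subseteq\sigma(T_i)$ and $\sigma(T_i)\subseteq\pi_i(\sigma_T(T))$ separately. The first will be a direct contracting-homotopy computation on the Koszul complex; the second is the main obstacle and is typically handled either by invoking the polynomial spectral mapping theorem for the Taylor functional calculus or by a careful invariant-subspace reduction.

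I would prove the inclusion $\pi_i(\sigma_T(T))\subseteq\sigma(T_i)$ by contrapositive. Suppose $\lambda=(\lambda_1,\ldots,\lambda_n)\in\mathbb{C}^n$ is such that $S_i:=T_i-\lambda_i I$ is invertible. On the Koszul complex $E(H,T-\lambda I)$, I define a homotopy $h_k\colon E_k^n(H)\to E_{k+1}^n(H)$ on basis elements by
\[
h_k\bigl(x\otimes e_{j_1}\wedge\cdots\wedge e_{j_k}\bigr)=\begin{cases}0,&\text{if } i\in\{j_1,\ldots,j_k\},\\[2pt] S_i^{-1}x\otimes e_i\wedge e_{j_1}\wedge\cdots\wedge e_{j_k},&\text{otherwise.}\end{cases}
\]
Because the $T_j$'s commute, $S_i^{-1}$ commutes with every $S_j:=T_j-\lambda_j I$, and the standard exterior-algebra identity $\iota_{e_j}\epsilon_{e_i}+\epsilon_{e_i}\iota_{e_j}=\delta_{ij}\cdot\mathrm{id}$ (with $\iota_{e_j}$ the interior product and $\epsilon_{e_i}$ the wedge operator) yields, after a routine check, $\delta_{k+1}h_k+h_{k-1}\delta_k=\mathrm{id}_{E_k^n(H)}$. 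Thus $E(H,T-\lambda I)$ is contractible, every homology group vanishes, so $T-\lambda I$ is Taylor invertible and $\lambda\notin\sigma_T(T)$.

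For the reverse inclusion $\sigma(T_i)\subseteq\pi_i(\sigma_T(T))$, the cleanest route is to apply the polynomial spectral mapping theorem for Taylor's analytic functional calculus to the coordinate polynomial $p(z_1,\ldots,z_n)=z_i$; since $p(T)=T_i$, this yields $\sigma(T_i)=\sigma(p(T))=p(\sigma_T(T))=\pi_i(\sigma_T(T))$. A more self-contained approach begins with $\mu\in\sigma(T_i)$ and tries to realize it as an $i$-th coordinate of a joint spectral point. When $N(T_i-\mu I)$ is non-trivial it is invariant under every $T_j$ by commutativity; on this subspace $T_i$ acts as $\mu I$, so by the inclusion proved above the Taylor spectrum of the restricted tuple is contained in $\{z_i=\mu\}$, and by non-emptiness of the Taylor spectrum there is such a joint spectral point — which must then be transported back to $\sigma_T(T)$. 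The remaining cases (where $N(T_i-\mu I)=\{0\}$ but $R(T_i-\mu I)$ is not dense, or both fail but $R(T_i-\mu I)$ is not closed) are handled dually via $T^*$ or by a joint approximate-eigenvector argument.

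The main obstacle is the reverse inclusion. In the single-variable setting it is already the case that $\sigma(A|_K)\not\subseteq\sigma(A)$ in general for an invariant subspace $K$, so carrying spectral points through restrictions within the Koszul-complex framework is delicate. Invoking the spectral mapping theorem sidesteps this difficulty entirely, at the cost of relying on a considerably deeper result from multivariable operator theory.
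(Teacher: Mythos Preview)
The paper does not prove this lemma; it is quoted verbatim from Taylor's original article \cite[Lemma~3.1]{TAYLOR} and used as a black box in the preliminaries section. There is therefore no proof in the paper to compare your proposal against.

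On the merits of your proposal itself: the first inclusion via the explicit contracting homotopy is standard and correct. For the reverse inclusion, your invocation of the spectral mapping theorem for the Taylor functional calculus with $p(z)=z_i$ is valid as written, but you should be aware that in Taylor's original development the projection property is established \emph{before} the analytic functional calculus and its spectral mapping theorem; depending on how one sets up the theory, this route can be circular. Your alternative self-contained sketch has a genuine gap that you yourself flag but do not close: once you restrict to the invariant subspace $N(T_i-\mu I)$ and locate a point in $\sigma_T(T|_{N(T_i-\mu I)})$, there is no general mechanism to ``transport it back'' to $\sigma_T(T)$, since Taylor spectra do not behave monotonically under restriction to invariant subspaces. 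Taylor's own argument for the hard inclusion proceeds instead through a short exact sequence of Koszul complexes obtained by dropping one variable, together with the associated long exact sequence in homology, and does not pass through invariant-subspace restrictions at all.
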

Suppose $T=(T_1,T_2,\ldots T_n)$ is a commuting $n$-tuple of operators in $\mathcal{B}(H)$ and $M\subseteq H$ is a subspace of $H$. Then $M$ is said to be invariant under $T$, if $M$ is invariant under $T_i$ for every $1\leq i\leq n$.
\begin{theorem}\cite[Application 5.24, Page 59]{CURTO}\label{thm disjoint spectra}
	Let $T=(T_1,T_2,\ldots T_n)$ be a commuting $n$-tuple of operators in $\mathcal{B}(H)$ and $\sigma_T(T)=K_1\cup K_2$, where $K_1$ and $K_2$ are nonempty disjoint compact subsets of $\sigma_T(T)$. Then there exist $T$-invariant subspaces $M_1$ and $M_2$ such that $M_1\cap M_2=\{0\}, M_1+ M_2=H$ and
	$\sigma_T(T|_{M_i})\subseteq K_i$, for $i=1,2$.
\end{theorem}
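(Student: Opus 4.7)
The plan is to produce the decomposition via Taylor's analytic functional calculus for the commuting $n$-tuple $T$. Since Taylor's spectrum is compact in $\mathbb{C}^n$ and $K_1,K_2$ are nonempty, disjoint, and compact, I can choose open sets $U_1,U_2\subset\mathbb{C}^n$ with $\overline{U_1}\cap\overline{U_2}=\emptyset$ and $K_i\subset U_i$. On the open neighborhood $U=U_1\cup U_2$ of $\sigma_T(T)$ the function $\chi_i$ defined to be $1$ on $U_i$ and $0$ on $U_{3-i}$ is holomorphic (indeed locally constant), so it belongs to the domain of Taylor's functional calculus.

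Next I would set $P_i:=\chi_i(T)\in\mathcal{B}(H)$ for $i=1,2$. Because Taylor's functional calculus is a unital algebra homomorphism from $\mathcal{O}(\sigma_T(T))$ into the bicommutant of $T$, the identities $\chi_i^{\,2}=\chi_i$, $\chi_1\chi_2=0$, and $\chi_1+\chi_2=1$ on $U$ translate into
\begin{equation*}
P_i^{\,2}=P_i,\qquad P_1P_2=P_2P_1=0,\qquad P_1+P_2=I,
\end{equation*}
and moreover each $P_i$ commutes with every $T_j$. I would then define $M_i:=R(P_i)$. From the idempotent relations one reads off $M_1+M_2=H$ and $M_1\cap M_2=\{0\}$, and the commutation of $P_i$ with each $T_j$ forces $M_i$ to be invariant under the whole tuple $T$. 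So the algebraic part of the decomposition is essentially forced once the projections exist.

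The real content is the spectral localization $\sigma_T(T|_{M_i})\subseteq K_i$. For this I would invoke the super-position / spectral mapping principle of Taylor's calculus together with the following observation: if $f\in\mathcal{O}(U)$ vanishes on a neighborhood of $K_i$, then $fP_i=(f\chi_i)(T)$, and $f\chi_i$ vanishes on a neighborhood of $\sigma_T(T)$, which after an approximation argument (or by the fact that Taylor's calculus factors through germs) yields $fP_i=0$. Applied to functions with small support this shows that on $M_i=R(P_i)$ the restricted tuple is annihilated by all coordinate functions supported off $K_i$, hence its Taylor spectrum is contained in $K_i$. Equivalently, one can argue directly: for $\lambda\notin K_i$, one can build a holomorphic inverse for the coordinate functions $z_j-\lambda_j$ on a neighborhood of $K_i$ (using that Taylor invertibility on a neighborhood of a compact set is characterized by a Koszul complex argument), and applying the functional calculus on $M_i$ produces an inverse of the Koszul complex of $T|_{M_i}-\lambda$.

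The main obstacle is precisely this last step, i.e.\ the assertion that the Taylor spectrum of the compression of $T$ to the range of a Taylor-calculus idempotent sits in the support of the corresponding germ. This is not formal: it rests on the full strength of Taylor's superposition theorem (that evaluation at $T$ is a sheaf map compatible with restriction of germs) and on the naturality of the Koszul complex under the calculus. Once that machinery is in hand, the construction of $P_1,P_2$ via $\chi_1,\chi_2$ and the verification of the algebraic identities are routine, and the theorem follows.
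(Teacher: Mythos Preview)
The paper does not supply its own proof of this statement: it is quoted verbatim as a preliminary from Curto's survey \cite[Application 5.24]{CURTO}, so there is nothing in the present paper to compare your argument against. That said, your approach---building the idempotents $P_i=\chi_i(T)$ from the locally constant functions $\chi_i$ via Taylor's analytic functional calculus, reading off the direct-sum decomposition from $P_1+P_2=I$ and $P_1P_2=0$, and then localizing the spectrum of $T|_{M_i}$ inside $K_i$---is precisely the classical argument that Curto records, so in that sense you are on the same track as the cited source.

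Your identification of the genuine difficulty is accurate: the algebra of the idempotents is formal once one accepts that the Taylor calculus is a unital homomorphism into the bicommutant, and the only substantive step is $\sigma_T(T|_{M_i})\subseteq K_i$. The way this is usually made precise (and the way Curto does it) is cleaner than the ``approximation'' or ``sheaf map'' language you sketch: one observes that for any $f$ holomorphic near $K_i$ the operator $f(T|_{M_i})$ is well defined as the restriction of $(f\chi_i)(T)$ to $M_i$, so $T|_{M_i}$ admits a functional calculus based on an arbitrarily small neighborhood of $K_i$; the spectral mapping theorem for Taylor's calculus then forces $\sigma_T(T|_{M_i})$ to lie in that neighborhood, hence in $K_i$. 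Phrasing it this way avoids the vaguer appeal to ``annihilation by coordinate functions supported off $K_i$'' and makes the dependence on Taylor's machinery explicit.
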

\begin{definition}\cite[Definition 2.2]{CURTO HERRE}
	Let $T=(T_1,T_2,\ldots T_n)$ and $S=(S_1,S_2,\ldots S_n)$ be two $n$-tuple of operators on $H$. Then $T$ is said to be jointly similar to $S$ if there exist an invertible operator $W\in\mathcal{B}(H)$ such that $T_j=WS_jW^{-1}$ for $j=1,2,\ldots n$.
\end{definition}
\begin{corollary}\cite[Application 5.24(continued), Page 60]{CURTO}\label{matrix rep}
	Let $T$ be as defined in Theorem \ref{thm disjoint spectra}. Then $T$ is jointly similar to
	\[
	\begin{bmatrix}
	T|_{M_1}&0\\
	0&A
	\end{bmatrix},
	\]
	where $A\in\mathcal{B}(M_1^{\perp})$, $\sigma_T(T|_{M_1})=K_1$ and $\sigma_T(A)=K_2$.
\end{corollary}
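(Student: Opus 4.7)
The strategy is to upgrade the algebraic direct sum decomposition $H = M_1 + M_2$ produced by Theorem \ref{thm disjoint spectra} to a block-diagonal representation with respect to the orthogonal decomposition $H = M_1 \oplus M_1^\perp$. Since $M_1$ and $M_2$ are closed with $M_1 \cap M_2 = \{0\}$ and $M_1 + M_2 = H$, the closed graph theorem furnishes a bounded projection $P$ onto $M_2$ along $M_1$; this is the crucial input. To transfer between the two decompositions, I would show that the restriction $S := P|_{M_1^\perp} : M_1^\perp \to M_2$ is a topological isomorphism. Injectivity is immediate since $Sz = 0$ forces $z \in M_1 \cap M_1^\perp = \{0\}$, and surjectivity follows because for $y \in M_2$ the orthogonal decomposition $y = y_1 + z$ with $y_1 \in M_1$, $z \in M_1^\perp$ yields $Sz = Py = y$; the open mapping theorem then supplies bicontinuity.

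With $S$ in hand, define $W \in \mathcal{B}(H)$ by $W(x_1 + z) := x_1 + Sz$ for $x_1 \in M_1$, $z \in M_1^\perp$. Its inverse sends $y_1 + y_2' \in M_1 + M_2$ to $y_1 + S^{-1} y_2'$, and is bounded again by the open mapping theorem. Since each $T_j$ leaves both $M_1$ and $M_2$ invariant, evaluating $W^{-1} T_j W$ on $M_1$ and on $M_1^\perp$ separately yields, with respect to $H = M_1 \oplus M_1^\perp$,
\[
W^{-1} T_j W = \begin{bmatrix} T_j|_{M_1} & 0 \\ 0 & S^{-1}(T_j|_{M_2})S \end{bmatrix}.
\]
Setting $A_j := S^{-1}(T_j|_{M_2})S \in \mathcal{B}(M_1^\perp)$ and $A := (A_1, A_2, \ldots, A_n)$ delivers the required joint similarity, and in particular exhibits $A$ as jointly similar to $T|_{M_2}$.

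To sharpen the containments $\sigma_T(T|_{M_i}) \subseteq K_i$ from Theorem \ref{thm disjoint spectra} into equalities, I would appeal to two standard facts: joint similarity preserves the Taylor spectrum, so $\sigma_T(A) = \sigma_T(T|_{M_2})$; and the Taylor spectrum of a block-diagonal commuting tuple is the union of the Taylor spectra of its diagonal summands, a consequence of the additivity of Koszul homology under direct sums. Combined with $\sigma_T(T) = K_1 \cup K_2$ and the disjointness of $K_1$ and $K_2$, this forces $\sigma_T(T|_{M_1}) = K_1$ and $\sigma_T(A) = K_2$. I expect the main technical obstacle to be the clean invocation of the block-diagonal Taylor spectrum identity; everything else reduces to linear algebra together with the closed graph and open mapping theorems.
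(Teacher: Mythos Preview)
The paper does not supply its own proof of this corollary; it is quoted verbatim as \cite[Application~5.24 (continued)]{CURTO} and used as a black box. There is therefore no ``paper's proof'' to compare against.

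Your argument is correct and is precisely the standard way this statement is established. The Riesz decomposition of Theorem~\ref{thm disjoint spectra} produces the algebraic direct sum $H=M_1\dotplus M_2$ with bounded oblique projections, and your map $S=P|_{M_1^\perp}:M_1^\perp\to M_2$ is exactly the device used to convert this into a block-diagonal form relative to the orthogonal decomposition $H=M_1\oplus M_1^\perp$. The computation $W^{-1}T_jW=\operatorname{diag}(T_j|_{M_1},\,S^{-1}T_j|_{M_2}S)$ is clean, and your upgrade of the spectral inclusions to equalities via the union formula $\sigma_T(B\oplus C)=\sigma_T(B)\cup\sigma_T(C)$ together with the disjointness of $K_1$ and $K_2$ is the right move. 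That union formula is indeed the only point requiring an external reference; it follows from the fact that the Koszul complex of a direct sum splits as the direct sum of the Koszul complexes (see, e.g., \cite[Lemma~1.2]{TAYLOR} or the discussion in \cite{CURTO}), so it is not a genuine obstacle.
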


Suppose $T=(T_1,T_2,\ldots T_n)$ be an $n$-tuple of commuting operators in $\mathcal{B}(H)$. Then we can associate $T$ with $R(T):=\delta_T+\delta_T^*$, where $\delta_T=T_1S_1+T_2S_2+\cdots T_nS_n$ and $S_j\zeta:=e_j\wedge \zeta$ for $j=1,\ldots n$, $\zeta\in E^n[e].$ For $(z_1,z_2,\ldots z_n)\in\mathbb{C}^n$,
$$\bar{\partial}=\frac{\partial}{\partial\bar{z}_1}d\bar{z}_1+\frac{\partial}{\partial\bar{z}_2}d\bar{z}_2+\cdots+\frac{\partial}{\partial\bar{z}_n}d\bar{z}_n.$$
 We write $H(\sigma_T(T))$ for the set of all analytic functions in a neighbourhood of $\sigma_T(T)$. We denote the set of all $X$-valued analytic functions in $U$ by $H(U,X)$. Let $f\in H(\sigma_T(T))$ and $U$ be a neighbourhood of $\sigma_T(T)$ such that $f$ is analytic in $U$. It is possible to choose a compact neighbourhood $\Delta$ of $\sigma_T(T)$ such that $\Delta\subset U$ and boundary $\partial\Delta$ is a smooth surface. Then the operator $f(T)$ is defined by
\begin{equation*}
f(T):=\frac{1}{(2\pi i)^n}\int_{\partial\Delta}f(w)M(w-T)dw,
\end{equation*}
where
$$M(w-T)\eta:=(-1)^{n-1}PH(w-T)T\eta\text{ for }\eta\in H(U,X)\otimes E^n[e]\text{ with }\bar{\partial}\eta=0,$$ $P$ is the projection from $H(U,X)\otimes E^n[e,d\bar{z}]$ onto $H(U,X)\otimes E^n[e]$ and $$H(z-T)\eta(z):=\underset{j=0}{\overset{p-1}{\sum}}\underset{k=0}{\overset{j}{\sum}}(-1)^k(R(z-T))^{-1}(\bar{\partial}(R(z-T))^{-1})^k\eta_{j-k}(z),$$ for $\eta\in H(U,X)\otimes E_p^n[e,d\bar{z}]$.
\begin{theorem}\cite[Theorem 5.19, Page 56]{CURTO}
	Let $T=(T_1,T_2,\ldots T_n)$ be a commuting $n$-tuple in $\mathcal{B}(H)$ and $f\in H(\sigma_T(T))$. Then $\sigma_T(f(T))=f(\sigma_T(T))$.
\end{theorem}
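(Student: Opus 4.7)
The plan is to establish the two inclusions separately, observing first that since $f(T)$ is a single operator, $\sigma_T(f(T)) = \sigma(f(T))$.

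For the inclusion $\sigma_T(f(T)) \subseteq f(\sigma_T(T))$, I argue contrapositively. If $\mu \notin f(\sigma_T(T))$, then $g(z) := 1/(f(z) - \mu)$ is holomorphic on a neighbourhood of $\sigma_T(T)$, so $g \in H(\sigma_T(T))$. The Taylor analytic functional calculus $h \mapsto h(T)$ defined by the integral formula preceding the statement is a unital algebra homomorphism, a property that follows from its integral representation together with a Cauchy--Weil--type argument. Applying the homomorphism to the identity $g(z)(f(z) - \mu) = 1$ gives $g(T)(f(T) - \mu I) = (f(T) - \mu I)g(T) = I$, so $f(T) - \mu I$ is invertible and $\mu \notin \sigma(f(T))$.

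For the reverse inclusion $f(\sigma_T(T)) \subseteq \sigma_T(f(T))$, I fix $\lambda \in \sigma_T(T)$ and suppose for contradiction that $f(T) - f(\lambda) I$ is invertible. On a convex open neighbourhood $U$ of $\lambda$ contained in the domain of analyticity of $f$, integration of $\nabla f$ along the segment from $\lambda$ to $z$ produces holomorphic functions $h_1, \ldots, h_n$ on $U$ with
\begin{equation*}
f(z) - f(\lambda) \;=\; \sum_{i=1}^{n} (z_i - \lambda_i)\, h_i(z).
\end{equation*}
The aim is to lift this identity to an operator identity $f(T) - f(\lambda) I = \sum_i (T_i - \lambda_i) h_i(T)$ and then, using the assumed inverse of $f(T) - f(\lambda) I$, manufacture a contracting chain homotopy $K$ on the Koszul complex $E(H, T - \lambda I)$ satisfying $K\delta_{T-\lambda} + \delta_{T-\lambda} K = I$. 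This would force every $H_k(E(H, T - \lambda I))$ to vanish, contradicting $\lambda \in \sigma_T(T)$.

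The central obstacle is that the $h_i$ are defined only on $U$, whereas the functional calculus as set up in the preliminaries requires analyticity on a neighbourhood of all of $\sigma_T(T)$. I would resolve this by localising: choose a compact, relatively open subset $K_1$ of $\sigma_T(T)$ with $\lambda \in K_1 \subset U$ and set $K_2 = \sigma_T(T) \setminus K_1$. Applying Theorem \ref{thm disjoint spectra} and Corollary \ref{matrix rep}, one may assume, up to joint similarity, that $T$ has block-diagonal form with summands $T|_{M_1}$ and $A$, whose Taylor spectra are $K_1$ and $K_2$ respectively. On the $A$-piece, $\lambda \notin K_2$ already gives Taylor invertibility of $A - \lambda I$. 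On the $T|_{M_1}$-piece, $\sigma_T(T|_{M_1}) = K_1 \subset U$, so the $h_i(T|_{M_1})$ are bona fide operators produced by the functional calculus, the factorization lifts to a genuine operator identity, and the assumed invertibility of $f(T) - f(\lambda) I$ (preserved under similarity and passage to the spectral summand) delivers the required homotopy. Carrying out this Koszul-level homotopy construction is, to my view, the technical heart of the result and follows the pattern of Taylor's original argument.
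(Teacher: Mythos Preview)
The paper does not prove this statement; it is quoted from Curto's survey as background in the preliminaries, with no argument supplied. So there is nothing in the paper to compare against, and I assess your proposal on its own.

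Your first inclusion is fine: granted that the Taylor calculus is a unital algebra homomorphism (which it is, by Taylor's work), the inversion of $f-\mu$ goes through exactly as you wrote.

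The second inclusion has a real gap in the localization step. To invoke Theorem~\ref{thm disjoint spectra} you need $\sigma_T(T)=K_1\cup K_2$ with $K_1,K_2$ \emph{disjoint} and compact; this forces $K_1$ to be clopen in $\sigma_T(T)$, hence a union of connected components. There is no reason the connected component of $\lambda$ in $\sigma_T(T)$ should fit inside your convex neighbourhood $U$. If, say, $\sigma_T(T)$ is connected and not contained in any convex subset of the domain of $f$, the only admissible choices are $K_1=\emptyset$ or $K_1=\sigma_T(T)$, and your splitting collapses. So as written, the argument does not cover the generic case.

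The standard repair is not to localize the spectrum but to globalize the factorization. On a Stein open neighbourhood $V$ of $\sigma_T(T)$ contained in the domain of $f$, the ideal of functions vanishing at $\lambda$ is generated by $z_1-\lambda_1,\ldots,z_n-\lambda_n$, so (by Cartan's Theorem~B, or the $\bar\partial$-machinery already underlying the functional calculus in the paper) one has
\[
f(z)-f(\lambda)=\sum_{i=1}^n (z_i-\lambda_i)\,h_i(z)
\]
with each $h_i$ holomorphic on all of $V$. Then $h_i(T)$ is defined outright, the operators $S_i:=h_i(T)\bigl(f(T)-f(\lambda)I\bigr)^{-1}$ commute with every $T_j-\lambda_j I$ and satisfy $\sum_i (T_i-\lambda_i I)S_i=I$, and the usual Koszul contraction built from the $S_i$ shows $T-\lambda I$ is Taylor invertible, contradicting $\lambda\in\sigma_T(T)$. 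Your homotopy idea is correct; it is only the passage from the local segment-integral $h_i$ to operators on $H$ that needs the stronger several-complex-variables input.
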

For more details about analytic functional calculus, we refer to \cite{CURTO,VASILESCU2}.

For a commuting $n$-tuple of operators the Taylor Weyl spectrum and the joint Weyl spectrum are related as follows.
\begin{lemma}\cite[Lemma 2]{KIM}\label{lemma joint weyl spectrum}
	If $T=(T_1,T_2,\ldots T_n)$ is a commuting $n$-tuple of operators in $\mathcal{B}(H)$, then $\sigma_{T_W}(T)\subseteq\omega(T)$.
\end{lemma}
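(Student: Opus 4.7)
The plan is to prove the contrapositive: assuming $\lambda \notin \omega(T)$, I will show that $T-\lambda I$ is Taylor Fredholm of index $0$, hence $\lambda \notin \sigma_{T_W}(T)$. Without loss of generality we may shift and take $\lambda=0$.

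By the definition of the joint Weyl spectrum, if $0 \notin \omega(T)$ then there exists a commuting tuple $K=(K_1,\ldots,K_n)$ with each $K_i \in \mathcal{K}(H)$, such that $T+K$ is a commuting $n$-tuple and $0 \notin \sigma_T(T+K)$. This means $T+K$ is Taylor invertible, so every Koszul homology group $H_k(E(H,T+K))$ is trivial. In particular, $T+K$ is Taylor Fredholm with
\[
\operatorname{ind}(T+K)=\sum_{k=0}^{n}(-1)^{k}\dim H_k(E(H,T+K))=0.
\]

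The decisive step is to invoke the stability of the Taylor Fredholm property and the Koszul index under compact commuting perturbations. Since $T=(T+K)+(-K)$, where $-K=(-K_1,\ldots,-K_n)$ is a commuting tuple of compact operators and $T=(T+K)-K$ is itself a commuting tuple by hypothesis, the standard perturbation theorem for the Koszul complex yields that $T$ is Taylor Fredholm and $\operatorname{ind}(T)=\operatorname{ind}(T+K)=0$. Consequently $0 \notin \sigma_{T_W}(T)$, which gives the desired inclusion $\sigma_{T_W}(T)\subseteq \omega(T)$.

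The main obstacle is the invocation of the stability theorem: one needs that compact perturbations of a commuting tuple, when they again form a commuting tuple, preserve the Fredholm property of the Koszul complex and leave the alternating sum of homology dimensions unchanged. This is a standard but nontrivial result in multivariable spectral theory (see Curto's survey or Vasilescu's monograph), and as the lemma is quoted from \cite{KIM}, the authors presumably cite it directly rather than proving it from scratch. Everything else in the argument is a transparent translation between the Taylor-invertible, Taylor-Fredholm, and compact-perturbation languages.
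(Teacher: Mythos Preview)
Your argument is correct and is exactly the standard proof: pass from $\lambda\notin\omega(T)$ to a Taylor-invertible commuting compact perturbation $T+K$, and then invoke the compact-perturbation stability of the Taylor Fredholm property and the Koszul index (Curto's Theorems~2 and~3 in \cite{CURTOfredholm}) to conclude that $T-\lambda I$ is Fredholm of index $0$.

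There is nothing to compare against here: the paper does not give its own proof of this lemma but simply quotes it from \cite{KIM}, as you yourself anticipated. For what it is worth, the paper does use precisely the stability result you invoke---see the proof of Proposition~\ref{prop*paranormal}, where the authors write ``By \cite[Theorem 2, 3]{CURTOfredholm}, we know that $T+K-\lambda I$ is Fredholm of $\text{ind}(T+K-\lambda I)=\text{ind}(T-\lambda I)=0$''---so your reconstruction is fully in line with the tools the paper takes for granted.
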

The above inclusion is strict. For example, consider the right shift operator $R$ on $l_2(\mathbb{N})$. Then $\omega(R,R)=\overline{D(0,1)}\times\overline{D(0,1)}\supsetneq\sigma_{T_W}(R,R)=\partial D(0,1)\times\partial D(0,1),$ where $D(0,1)=\{\lambda\in\mathbb{C}:|\lambda|<1\}$.

 \begin{theorem}\cite[Theorem 2.8]{DASH}\label{thm dash}
 	Let $T=(T_1,T_2,\ldots T_n)$ be an $n$-tuple of operators on $H$. Then $\sigma_T(T)=\sigma_{T_e}(T)\cup\sigma_p(T)\cup\sigma_p(T^*)^*$, where $$\sigma_{T_e}(T)=\{\lambda\in\mathbb{C}^n:T-\lambda I\text{ is not Taylor Fredholm}\}.$$
 \end{theorem}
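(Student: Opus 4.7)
The plan is to prove both inclusions separately, using Fredholm analysis of the Koszul complex and, for the case $n=2$ relevant to the rest of the paper, Vasilescu's matrix reformulation.

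The inclusion $\sigma_{T_e}(T)\cup\sigma_p(T)\cup\sigma_p(T^*)^*\subseteq\sigma_T(T)$ is immediate. Taylor invertibility of $T-\lambda I$ requires every Koszul homology to vanish (and in particular to be finite dimensional), so $\sigma_{T_e}(T)\subseteq\sigma_T(T)$. If $\lambda\in\sigma_p(T)$ has joint eigenvector $x\neq 0$, then $x\in\bigcap_i N(T_i-\lambda_i I)=\ker\delta_n$, giving a non-trivial top homology. If $\bar\lambda\in\sigma_p(T^*)$ with witness $y\neq 0$, then $y\in\bigcap_i N((T_i-\lambda_i I)^*)=R(\delta_1)^\perp$; Taylor invertibility would force $R(\delta_1)=H$, contradicting $y\neq 0$.

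For the reverse inclusion I argue contrapositively: fix $\lambda\notin\sigma_{T_e}(T)\cup\sigma_p(T)\cup\sigma_p(T^*)^*$ and, after translation, take $\lambda=0$. Since $T$ is Taylor Fredholm, every differential $\delta_k$ of $E(H,T)$ has closed range and every homology group is finite dimensional. The hypothesis $0\notin\sigma_p(T)$ gives $H_n(E(H,T))=\bigcap_i N(T_i)=\{0\}$, while $0\notin\sigma_p(T^*)^*$ together with the closed range of $\delta_1$ yields
\[
H_0(E(H,T))\;\cong\;R(\delta_1)^\perp\;=\;\bigcap_i N(T_i^*)\;=\;\{0\}.
\]

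It remains to force the vanishing of the middle homologies. For $n=2$, I invoke Vasilescu's Theorem~2.2 to reduce Taylor invertibility of $(T_1,T_2)$ to invertibility of the operator $\hat T$ in \eqref{ Vas matrix representation }. The commutation $T_1T_2=T_2T_1$ makes the off-diagonal entries of $\hat T^*\hat T$ vanish, producing
\[
\hat T^*\hat T=\begin{bmatrix} T_1^*T_1+T_2^*T_2 & 0 \\ 0 & T_1T_1^*+T_2T_2^* \end{bmatrix}=\begin{bmatrix} \delta_2^*\delta_2 & 0 \\ 0 & \delta_1\delta_1^* \end{bmatrix},
\]
and both diagonal blocks are invertible: the first because $\delta_2$ is injective (from $H_2=\{0\}$) with closed range, the second because $\delta_1$ is surjective (from $H_0=\{0\}$) with closed range. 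Hence $\hat T$ is bounded below; combined with the Fredholmness of $\hat T$ inherited from Taylor Fredholmness of $T$, a parallel analysis of $\hat T\hat T^*$ using the same commutation and closed-range data gives injectivity of $\hat T^*$, whence $\hat T$ is invertible and $T$ is Taylor invertible. The principal obstacle is precisely this last step, forcing $H_1=\{0\}$ from the vanishing of only $H_0$ and $H_2$; this is the substantive content that Vasilescu's reformulation extracts from the commutativity relation. For general $n$ the same strategy applies with $\hat T$ replaced by the self-adjoint operator $R(T)=\delta_T+\delta_T^*$ on $E^n(H)$ introduced in the preliminaries, whose invertibility is equivalent to Taylor invertibility of $T$.
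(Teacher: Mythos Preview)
The paper does not supply its own proof of this statement; it is quoted verbatim from Dash \cite[Theorem~2.8]{DASH} and used as a black box. So there is no in-paper argument to compare against, and the question is simply whether your proposal stands on its own.

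It does not. Your analysis of $\hat T^*\hat T$ is correct: commutativity $T_1T_2=T_2T_1$ (equivalently $T_1^*T_2^*=T_2^*T_1^*$) kills the off-diagonal blocks and leaves $\operatorname{diag}(\delta_2^*\delta_2,\delta_1\delta_1^*)$, and from $H_0=H_2=\{0\}$ plus closed ranges you legitimately conclude that $\hat T$ is bounded below. The gap is in the sentence ``a parallel analysis of $\hat T\hat T^*$ using the same commutation and closed-range data gives injectivity of $\hat T^*$.'' Compute:
\[
\hat T\hat T^*=\begin{bmatrix}T_1T_1^*+T_2^*T_2 & T_1T_2^*-T_2^*T_1\\[2pt] T_2T_1^*-T_1^*T_2 & T_2T_2^*+T_1^*T_1\end{bmatrix}.
\]
The off-diagonal entries are $[T_1,T_2^*]$ and its adjoint, and these vanish only under \emph{double} commutativity, which is not assumed. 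So the ``parallel'' block-diagonal structure is simply absent, and you have no mechanism to force $\hat T^*$ injective (equivalently $\hat T$ surjective). Fredholmness of $\hat T$ together with bounded-belowness gives only a finite-dimensional cokernel, not a trivial one; nothing in your hypotheses pins down the index of $\hat T$, which is exactly what controls $\dim H_1$ once $H_0=H_2=0$.

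In short, the step ``$H_0=H_n=0$ and Taylor Fredholm $\Rightarrow$ Taylor invertible'' is precisely the nontrivial content here, and your argument does not establish it. The same objection applies to your general-$n$ remark: invertibility of $R(T)=\delta_T+\delta_T^*$ is equivalent to Taylor invertibility, but you have only controlled the extreme pieces of its square, not the mixed terms $\delta_k\delta_k^*+\delta_{k+1}^*\delta_{k+1}$ governing the middle homologies.
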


\section{$\ast$-Paranormal operators}
This section is dedicated to the study of Weyl's theorem for a commuting tuple of $\ast$-paranormal operators.

 A linear operator $T\in\mathcal{B}(H)$ is called \textit{isoloid}, if every isolated point of $\sigma_T(T)$ is an eigenvalue. From \cite[Corollary 1]{UCHIYAMA} and \cite[Lemma 3.4]{UCHIYAMAp}, it is well known that every $\ast$-paranormal and paranormal operator are isoloid. Now we extend these results to commuting $n$-tuple of $\ast$-paranormal and paranormal operators.
\begin{lemma}\label{Lemma *isoloid}
	Let $T=(T_1,T_2,\ldots T_n)$ be a commuting $n$-tuple of $\ast$-paranormal (or paranormal) operators. If $\lambda\in\mathbb{C}^n$ is an isolated point of $\sigma_T(T)$, then $\lambda\in\sigma_p(T)$.
\end{lemma}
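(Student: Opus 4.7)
The plan is to reduce to a closed subspace on which the Taylor spectrum equals $\{\lambda\}$ and then peel off the coordinates one at a time using the single-operator isoloid property that was recalled immediately before the lemma.

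Since $\lambda$ is isolated in $\sigma_T(T)$, I would write $\sigma_T(T)=\{\lambda\}\cup K$ with $K:=\sigma_T(T)\setminus\{\lambda\}$ compact and disjoint from $\{\lambda\}$, and invoke Theorem~\ref{thm disjoint spectra} to produce a $T$-invariant closed subspace $M_1\subseteq H$ with $\sigma_T(T|_{M_1})\subseteq\{\lambda\}$. Since the Taylor spectrum is non-empty on a non-trivial Hilbert space, $M_1\ne\{0\}$ and in fact $\sigma_T(T|_{M_1})=\{\lambda\}$; the projection property then gives $\sigma(T_i|_{M_1})=\{\lambda_i\}$ for each $i$. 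A short direct computation would confirm that each $T_i|_{M_1}$ inherits the $\ast$-paranormal (respectively, paranormal) property from $T_i$: paranormality is immediate from the defining inequality applied to $x\in M_1$, while $\ast$-paranormality uses $(T_i|_{M_1})^*=P_{M_1}T_i^*|_{M_1}$ together with $\|P_{M_1}T_i^*x\|\le\|T_i^*x\|$, where $P_{M_1}$ is the orthogonal projection onto $M_1$.

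I would then construct a joint eigenvector inductively. Set $H_0:=M_1$ and, for $k=1,\dots,n$, define $H_k:=H_{k-1}\cap N(T_k-\lambda_k I)$. Because each $T_j$ commutes with $T_k$, each $H_{k-1}$ is invariant under $T_k,\dots,T_n$, and the restriction argument above shows that $T_k|_{H_{k-1}}$ is again $\ast$-paranormal (resp.\ paranormal). Moreover, $T_k|_{M_1}-\lambda_k I$ is quasinilpotent, and its further restriction to the invariant subspace $H_{k-1}$ remains quasinilpotent (since $\|(B|_{H_{k-1}})^m\|\le\|B^m\|$ forces $r(B|_{H_{k-1}})\le r(B)=0$), so $\sigma(T_k|_{H_{k-1}})=\{\lambda_k\}$. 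Invoking the isoloid property of a single $\ast$-paranormal or paranormal operator quoted just before the lemma, $\lambda_k$ is an eigenvalue of $T_k|_{H_{k-1}}$ and hence $H_k\ne\{0\}$. After $n$ steps, any non-zero $x\in H_n$ satisfies $(T_i-\lambda_i I)x=0$ for all $i$, giving $\lambda\in\sigma_p(T)$.

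The main technical hurdle is to keep both the $\ast$-paranormal (resp.\ paranormal) structure and the single-point spectrum alive along the decreasing tower $M_1\supseteq H_1\supseteq\cdots\supseteq H_n$; once these survive each restriction, the scalar isoloid result closes the argument. In particular, $M_1$ from Theorem~\ref{thm disjoint spectra} is generally only invariant and not reducing, so it is crucial that $\ast$-paranormality (and not just some stronger property such as hyponormality) behaves well under restriction to non-reducing invariant subspaces.
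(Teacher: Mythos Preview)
Your argument is correct, but it takes a longer path than the paper's and extracts less. Both proofs begin identically: apply Theorem~\ref{thm disjoint spectra} to obtain a nontrivial $T$-invariant subspace $M_1$ with $\sigma_T(T|_{M_1})=\{\lambda\}$, use the projection property to get $\sigma(T_i|_{M_1})=\{\lambda_i\}$, and note that each $T_i|_{M_1}$ is again $\ast$-paranormal (resp.\ paranormal). At this point the paper invokes not merely the isoloid property but the sharper fact (also contained in \cite[Corollary 1]{UCHIYAMA} and \cite[Lemma 3.4]{UCHIYAMAp}) that a $\ast$-paranormal or paranormal operator whose spectrum is a single point $\{\lambda_i\}$ is actually equal to $\lambda_i I$. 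This immediately yields $T_i|_{M_1}=\lambda_i I_{M_1}$ for every $i$, so every nonzero vector of $M_1$ is a joint eigenvector and in fact $M_1=\bigcap_{i=1}^n N(T_i-\lambda_i I)$.

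Your inductive tower $M_1=H_0\supseteq H_1\supseteq\cdots\supseteq H_n$ works, and your remarks on why $\ast$-paranormality and the one-point spectrum survive each restriction are accurate. However, the whole descent is unnecessary once one knows the ``single-point spectrum implies scalar'' result, and your route only produces a single joint eigenvector rather than the equality $M_1=\bigcap_{i=1}^n N(T_i-\lambda_i I)$. That stronger identification is explicitly reused later in the paper (e.g.\ in the proofs of Proposition~\ref{isolated point of *para}, Theorem~\ref{Weyls thm}, and Lemma~\ref{iso eigenvalue}), so the paper's sharper version of the single-operator input is not just a shortcut here but a genuine ingredient downstream.
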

\begin{proof}
	Let $\lambda=(\lambda_1,\lambda_2\ldots\lambda_n)$ be an isolated element of $\sigma_T(T)$. By Theorem \ref{thm disjoint spectra}, there exist $T$-invariant subspaces $M_1$ and $M_2$ such that $\sigma_T(T|_{M_1})=\{\lambda\}$ and $\sigma_T(T|_{M_2})=\sigma_T(T)\setminus\{\lambda\}$. Clearly $\underset{i=1}{\overset{n}{\cap}}N(T_i-\lambda_i I)\subseteq M_1$. We claim that  $\underset{i=1}{\overset{n}{\cap}}N(T_i-\lambda_i I)= M_1$.
	
	As $M_1$ is $T$-invariant subspace, thus $T|_{M_1}=(T_1|_{M_1},\ldots T_n|_{M_1})$ is a commuting $n$-tuple of $\ast$-paranormal (or paranormal) operators. By projection property of Taylor joint spectrum, $\sigma(T_i|_{M_1})=\{\lambda_i\}$. As a result of \cite[Corollary 1]{UCHIYAMA} and \cite[Lemma 3.4]{UCHIYAMAp}, $T_i|_{M_1}-\lambda_iI_{M_1}=0$, for every $1\leq i\leq n$. This implies $M_1\subseteq\underset{i=1}{\overset{n}{\cap}}N(T_i-\lambda_i I)$ and consequently $M_1=\underset{i=1}{\overset{n}{\cap}}N(T_i-\lambda_i I)$. Hence $\lambda\in\sigma_p(T).\qedhere$
\end{proof}
\begin{proposition}\label{isolated point of *para}
Let $T=(T_1,T_2,\ldots T_n)$ be a commuting $n$-tuple of $\ast$-paranormal operators in $\mathcal{B}(H)$. Then $\pi_{00}(T)\subseteq\sigma_T(T)\setminus\sigma_{T_W}(T)$.
\end{proposition}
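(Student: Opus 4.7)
The plan is to exploit the isolation of $\lambda\in\pi_{00}(T)$ to reduce the problem to a direct-sum situation where one summand is trivial (a scalar tuple on a finite-dimensional space) and the other is Taylor invertible.

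First I would fix $\lambda=(\lambda_1,\dots,\lambda_n)\in\pi_{00}(T)$. Since $\lambda$ is an isolated point of $\sigma_T(T)$, I can split $\sigma_T(T)=\{\lambda\}\cup(\sigma_T(T)\setminus\{\lambda\})$ into two disjoint compacts and apply Theorem \ref{thm disjoint spectra} together with Corollary \ref{matrix rep}. This produces $T$-invariant subspaces $M_1,M_2$ such that $T$ is jointly similar to the block diagonal tuple $\bigl(T|_{M_1}\oplus A\bigr)$ with $\sigma_T(T|_{M_1})=\{\lambda\}$ and $\sigma_T(A)=\sigma_T(T)\setminus\{\lambda\}$. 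Invoking the argument already used in Lemma \ref{Lemma *isoloid} (each $T_i|_{M_1}$ is $\ast$-paranormal with spectrum $\{\lambda_i\}$, hence equals $\lambda_i I_{M_1}$), I conclude that $T|_{M_1}=\lambda I_{M_1}$ and $M_1=\bigcap_{i=1}^{n}N(T_i-\lambda_i I)$. By the very definition of $\pi_{00}(T)$, $\dim M_1<\infty$.

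Next I would verify that $T-\lambda I$ is Taylor Fredholm of index $0$. Joint similarity preserves the Koszul complex up to isomorphism, so I may work with the block diagonal tuple $\bigl(\lambda I_{M_1}\oplus A\bigr)-\lambda I=\bigl(0_{M_1}\oplus(A-\lambda I)\bigr)$. The Koszul complex of a block diagonal tuple is the direct sum of the Koszul complexes of the two blocks, hence
\[
H_k\bigl(E(H,T-\lambda I)\bigr)\cong H_k\bigl(E(M_1,0)\bigr)\oplus H_k\bigl(E(M_2,A-\lambda I)\bigr).
\]
Since $\lambda\notin\sigma_T(A)$, the second summand vanishes for every $k$. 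For the first summand all differentials are zero, so $H_k(E(M_1,0))=E_k^n(M_1)=M_1\otimes E_k^n[e]$, which has dimension $\binom{n}{k}\dim M_1<\infty$. Hence each homology group of $T-\lambda I$ is finite-dimensional, i.e., $T-\lambda I$ is Taylor Fredholm.

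Finally, the index computation is the binomial identity
\[
\mathrm{ind}(T-\lambda I)=\sum_{k=0}^{n}(-1)^{k}\binom{n}{k}\dim M_1=\dim M_1\cdot(1-1)^{n}=0,
\]
so $\lambda\notin\sigma_{T_W}(T)$. Combined with the obvious inclusion $\lambda\in\sigma_T(T)$, this gives $\lambda\in\sigma_T(T)\setminus\sigma_{T_W}(T)$, which is the desired conclusion. The only non-routine ingredient is the direct-sum decomposition of the Koszul complex under the block diagonal structure; once that is recorded, everything else is a clean bookkeeping exercise, so I expect the main (mild) obstacle to be justifying that joint similarity and the block decomposition respect both the Fredholm property and the index of $T-\lambda I$.
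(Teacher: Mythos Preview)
Your argument is correct, but it follows a genuinely different route from the paper's proof. The paper does not compute the Koszul homology of $T-\lambda I$ directly. Instead, after identifying $M_1=\bigcap_i N(T_i-\lambda_i I)$ via Lemma~\ref{Lemma *isoloid}, it invokes the $\ast$-paranormal identity $N(T_i-\lambda_i I)=N((T_i-\lambda_i I)^{*})$ to see that $M_1$ is actually \emph{reducing}; then it argues that $\lambda\notin\sigma_T(T|_{M_1^{\perp}})$, constructs a finite-rank perturbation ($T-TP_{M_1}$ or $T+P_{M_1}$) whose Taylor spectrum misses $\lambda$, concludes $\lambda\notin\omega(T)$, and finally uses $\sigma_{T_W}(T)\subseteq\omega(T)$ (Lemma~\ref{lemma joint weyl spectrum}) to obtain $\lambda\notin\sigma_{T_W}(T)$. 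Your approach bypasses both the reducing property and the detour through the joint Weyl spectrum: you rely only on the joint-similarity block diagonalization of Corollary~\ref{matrix rep}, split the Koszul complex accordingly, and read off Fredholmness and index~$0$ from the binomial identity. This is more self-contained and uses strictly less of the $\ast$-paranormal structure (you never need $N(T_i-\lambda_i I)=N((T_i-\lambda_i I)^{*})$). The paper's route, on the other hand, yields the extra information $\lambda\notin\omega(T)$ along the way, which is of independent interest. The one point you should state explicitly for completeness is that joint similarity induces an isomorphism of Koszul complexes (hence preserves all homology dimensions and the index); this is standard, but it is the hinge on which your whole computation rests.
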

\begin{proof}
Let $\lambda=(\lambda_1,\lambda_2\ldots\lambda_n)\in\pi_{00}(T)$. By Theorem \ref{thm disjoint spectra}, there exist $T$-invariant subspaces $M_1$ and $M_2$ such that $\sigma_T(T|_{M_1})=\{\lambda\}$ and $\sigma_T(T|_{M_2})=\sigma_T(T)\setminus\{\lambda\}$. From the proof of Lemma \ref{Lemma *isoloid}, it is clear that $M_1=\underset{i=1}{\overset{n}{\cap}}N(T_i-\lambda_i I)$. By \cite[Lemma 4]{UCHIYAMA}, we have $N(T_i-\lambda_i I)=N((T_i-\lambda_iI)^*)$. Hence $M_1$ is a reducing subspace for $T$ and $T|_{M_1^{\perp}}$ is a commuting $n$-tuple of $\ast$-paranormal operators.

If $\lambda\in\sigma_T(T|_{M_1^{\perp}})$, then $\lambda\in\sigma_p(T|_{M_1^{\perp}})$, by Lemma \ref{Lemma *isoloid}. But this contradicts the fact that $M_1=\underset{i=1}{\overset{n}{\cap}}N(T_i-\lambda_i I)$. Hence $\lambda\notin\sigma_T(T|_{M_1^{\perp}})$. If $\lambda\ne 0$, then $\lambda\notin\sigma_T(T-TP_{M_1})$ and if $\lambda=0$, then $\lambda\notin\sigma_T(T+P_{M_1})$.

As $M_1$ is finite dimensional, we have $TP_{M_1}$ and $P_{M_1}$ are finite rank operators. So $\lambda\notin\omega(T)$ and as a consequence of Lemma \ref{lemma joint weyl spectrum}, $\lambda$$\notin$$\sigma_{T_W}(T)$. Hence $\lambda\in\sigma_T(T)\setminus\sigma_{T_W}(T).$
\end{proof}
By Lemma \ref{lemma joint weyl spectrum}, we know that $\sigma_{T_W}(T)\subseteq\omega(T)$ and in general the equality need not hold. Here we show that equality holds for a subclass of $\ast$-paranormal operators.
\begin{proposition}\label{prop*paranormal}
	Let $T=(T_1,T_2)$ be a commuting pair of $\ast$-paranormal operators in $\mathcal{B}(H)$. If $T$ has quasitriangular propetry, then $\omega(T)=\sigma_{T_W}(T)$.
\end{proposition}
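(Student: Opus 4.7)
The plan is to verify the non-trivial inclusion $\omega(T)\subseteq\sigma_{T_W}(T)$; the reverse is Lemma \ref{lemma joint weyl spectrum}. I argue by contraposition: given $\lambda=(\lambda_1,\lambda_2)\notin\sigma_{T_W}(T)$, I produce a finite-rank commuting perturbation $K$ with $\lambda\notin\sigma_T(T+K)$. The case $\lambda\notin\sigma_T(T)$ is immediate (take $K=0$), so suppose $\lambda\in\sigma_T(T)\setminus\sigma_{T_W}(T)$, i.e.\ $T-\lambda I$ is Taylor Fredholm of index zero.

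By Theorem \ref{thm dash} combined with $\sigma_{T_e}(T)\subseteq\sigma_{T_W}(T)$, we have $\lambda\in\sigma_p(T)\cup\sigma_p(T^*)^*$, and the quasitriangular hypothesis then forces $\lambda\in\sigma_p(T)$. Put $M:=N(T_1-\lambda_1I)\cap N(T_2-\lambda_2I)$. Since $H_2(E(H,T-\lambda I))=M$ and $T-\lambda I$ is Taylor Fredholm, $M$ is finite-dimensional. Furthermore, \cite[Lemma 4]{UCHIYAMA} applied to each $\ast$-paranormal $T_i$ gives $N(T_i-\lambda_iI)=N((T_i-\lambda_iI)^*)$, so every $x\in M$ is also a joint eigenvector of $(T_1^*,T_2^*)$ at $(\bar\lambda_1,\bar\lambda_2)$. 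A short computation (exploiting that $T_j^*x$ is a scalar multiple of $x$ to commute it past $T_i$) then shows $M$ reduces both $T_1$ and $T_2$.

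Decompose $H=M\oplus M^\perp$ and write $T_i=\lambda_iI_M\oplus T_i'$. The Koszul complex of $T-\lambda I$ splits as the direct sum of the Koszul complex of the zero pair on $M$ (whose homology groups have dimensions $\dim M$, $2\dim M$, $\dim M$) and that of $T'-\lambda I$ on $M^\perp$. A dimension count then shows $T'-\lambda I$ is Taylor Fredholm with $H_2(E(M^\perp,T'-\lambda I))=\{0\}$, i.e.\ $N(T_1'-\lambda_1I)\cap N(T_2'-\lambda_2I)=\{0\}$. Since each $T_i'$ is $\ast$-paranormal (as a restriction to a reducing subspace), \cite[Lemma 4]{UCHIYAMA} likewise yields $N((T_i'-\lambda_iI)^*)=N(T_i'-\lambda_iI)$, so the joint kernel of $(T_1'-\lambda_1I)^*$, $(T_2'-\lambda_2I)^*$ is also trivial. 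Combined with $\lambda\notin\sigma_{T_e}(T')$, Theorem \ref{thm dash} applied to $T'$ now forces $\lambda\notin\sigma_T(T')$.

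Finally, let $K:=(P_M,0)$; this is finite rank, and $T+K$ is a commuting pair because $P_M$ commutes with $T_2$ (as $M$ reduces $T_2$). Since the Taylor spectrum distributes over the direct sum on reducing subspaces, $\sigma_T(T+K)=\{(\lambda_1+1,\lambda_2)\}\cup\sigma_T(T')$, which excludes $\lambda$; therefore $\lambda\notin\omega(T)$, as desired. The step I expect to be most delicate is the Koszul-splitting bookkeeping: one must extract from it both the Taylor Fredholmness of $T'-\lambda I$ and the vanishing of its top homology, since together they are what permit Dash's identity to exclude $\lambda\in\sigma_T(T')$ even though the restriction $T'$ need not manifestly inherit the quasitriangular hypothesis.
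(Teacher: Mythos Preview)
Your overall architecture is sound and close in spirit to the paper's, but there is a concrete error in how you invoke \cite[Lemma~4]{UCHIYAMA}. You assert twice that this lemma yields the \emph{equality} $N(T_i-\lambda_iI)=N((T_i-\lambda_iI)^*)$ for a $\ast$-paranormal operator $T_i$; in fact it gives only the inclusion $N(T_i-\lambda_iI)\subseteq N((T_i-\lambda_iI)^*)$. The unilateral shift $S$ is hyponormal, hence $\ast$-paranormal, yet $N(S)=\{0\}\subsetneq N(S^*)$. For the first use (showing that $M$ reduces each $T_i$) the inclusion alone suffices, so no harm is done there. The second use, however, is exactly where your argument breaks: from $N(T_1'-\lambda_1I)\cap N(T_2'-\lambda_2I)=\{0\}$ and the inclusion $N(T_i'-\lambda_iI)\subseteq N((T_i'-\lambda_iI)^*)$ you cannot conclude that $\bigcap_i N((T_i'-\lambda_iI)^*)=\{0\}$; the containment runs the wrong way. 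Without this, Dash's identity does not exclude $\lambda\in\sigma_p(T'^*)^*$, and you have not shown $\lambda\notin\sigma_T(T')$.

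The repair is to bring the quasitriangular hypothesis back in at this point (you used it only to obtain $\lambda\in\sigma_p(T)$). From \cite[Lemma~4]{UCHIYAMA} one has $\bigcap_i N(T_i-\lambda_iI)\subseteq\bigcap_i N((T_i-\lambda_iI)^*)$; quasitriangularity gives the reverse dimension inequality, and since both spaces are finite-dimensional (Taylor Fredholmness), they coincide: $M=\bigcap_i N((T_i-\lambda_iI)^*)$. Now if $x\in M^\perp$ lies in $\bigcap_i N((T_i'-\lambda_iI)^*)$, then because $M$ reduces $T$ one has $x\in\bigcap_i N((T_i-\lambda_iI)^*)=M$, forcing $x=0$. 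With this correction your Dash-based conclusion goes through. This is essentially how the paper proceeds: it uses Lemma~4 \emph{together with} quasitriangularity to obtain the equality of the two joint kernels, then perturbs by $K=(P_M,P_M)$ and verifies $H_0=H_2=\{0\}$ by a direct computation (with $H_1=\{0\}$ following from $\operatorname{ind}=0$). Your route via the reducing-subspace splitting and Dash's theorem, and your asymmetric perturbation $K=(P_M,0)$, are legitimate variants once the gap above is closed.
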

\begin{proof}
In view of Lemma \ref{lemma joint weyl spectrum}, it is enough to show that $\omega(T)\subseteq\sigma_{T_W}(T)$. Let $\lambda\notin\sigma_{T_W}(T)$. To show $\lambda\notin\omega(T)$, we need to find a pair of compact operators $K=(K_1.K_2)$ such that $T+K-\lambda I$ is not Taylor invertible.

 As a consequence of \cite[Lemma 4]{UCHIYAMA} and quasitriangular property of $T$, we have $$N(T_1-\lambda_1I)\cap N(T_2-\lambda_2I)= N((T_1-\lambda_1I)^*)\cap N((T_2-\lambda_2I)^*).$$

Let $K$ be the projection onto $\underset{i=1}{\overset{2}{\cap}}N(T_i-\lambda_iI)$. Then $K$ is a finite-rank operator and $T+K-\lambda I:=(T_1+K-\lambda_1 I,T_2+K-\lambda_2 I)$. Now, our goal is to show that $T+K-\lambda I$ is a commuting pair of operators in $\mathcal{B}(H)$ and $0\notin\sigma_T(T+K-\lambda I)$.

We claim that $T_iK=KT_i=\lambda_iK$. To prove our claim, let $x\in H$. Then $x=x_1+x_2$, where $x_1\in\underset{i=1}{\overset{2}{\cap}}N(T_i-\lambda_i I)$ and $x_2\in\left(\underset{i=1}{\overset{2}{\cap}}N(T_i-\lambda_i I)\right)^{\perp}.$ We have $$T_iKx=T_iKx_1=T_ix_1=\lambda_ix_1=\lambda_iKx.$$ As $\underset{i=1}{\overset{2}{\cap}}N(T_i-\lambda_iI)$ is a reducing subspace for $T$, we have $KT_ix=\lambda_ix_1=\lambda_iKx$. Now, it can be easily verified that $$(T_1+K-\lambda_1I)(T_2+K-\lambda_2I)=(T_2+K-\lambda_2I)(T_1+K-\lambda_1I).$$

By \cite[Theorem 2, 3]{CURTOfredholm}, we know that $T+K-\lambda I$ is Fredholm of $\text{ind}(T+K-\lambda I)=\text{ind}(T-\lambda I)=0$. Let $E(H,(T+K-\lambda I))$ be the associated chain complex of $T+K-\lambda I$. Our next objective is to show that $T+K-\lambda I$ is Taylor invertible.

Let $x\in\underset{i=1}{\overset{2}{\cap}}N(T_i+K-\lambda_i I)$ and $x=x_1+x_2$, where $x_1\in\underset{i=1}{\overset{2}{\cap}}N(T_i-\lambda_i I)$ and $x_2\in\left(\underset{i=1}{\overset{2}{\cap}}N(T_i-\lambda_i I)\right)^{\perp}.$ As $\underset{i=1}{\overset{2}{\cap}}N(T_i-\lambda_iI)$ is a reducing subspace for $T$, we have the following equality.
\begin{align*}
0=\langle(T_i+K-\lambda_iI)x,x_1\rangle=\langle Kx_1,x_1\rangle
=\|Kx_1\|^2.
\end{align*}
It follows that $Kx_1=0$ and consequently $x_1\in\left(\underset{i=1}{\overset{2}{\cap}}N(T_i-\lambda_iI)\right)\cap\left(\underset{i=1}{\overset{2}{\cap}}N(T_i-\lambda_iI)\right)^{\perp}=\{0\}$. For $i=1,2$, we have
\begin{align*}
(T_i-\lambda_iI)(x_2)=&(T_i-\lambda_iI)(x_1+x_2)\\
=&(T_i-\lambda_iI+K)(x)\\
=&0.
\end{align*}
 This implies that $x_2\in\left(\underset{i=1}{\overset{2}{\cap}}N(T_i-\lambda_iI)\right)\cap\left(\underset{i=1}{\overset{2}{\cap}}N(T_i-\lambda_iI)\right)^{\perp}=\{0\}$ and thus $x=0$. Hence $H_0\left(E(H,(T+K-\lambda I))\right)=\{0\}.$ Similarly, we get $\underset{i=1}{\overset{2}{\cap}}N(T_i+K-\lambda_iI)^*=\{0\}$. As we know that
$$\text{dim}H_2\left(E(H,(T+K-\lambda I))\right)=\text{dim} N(T_i+K-\lambda_i I)^*=0,$$ this implies $H_2\left(E(H,(T+K-\lambda I))\right)=\{0\}$.
Since $\text{ind}(T+K-\lambda I)=0$, it follows that $H_1\left(E(H,(T+K-\lambda I))\right)=\{0\}$. So $0\notin\sigma_T\left(T+K-\lambda I\right)$ and hence $\lambda\notin\omega(T).\qedhere$
\end{proof}
We extend Chavan and Curto \cite{CHAVAN} result to $\ast$-paranormal operators.
\begin{theorem}\label{Weylthm *paranormal}
	Let $T=(T_1,T_2)$ be a commuting tuple of $\ast$-paranormal operators in $\mathcal{B}(H)$. If $T$ has quasitriangular property, then $T$ satisfies Weyl's theorem-I, that is $\sigma_T(T)\setminus\sigma_{T_W}(T)=\pi_{00}(T)$.
\end{theorem}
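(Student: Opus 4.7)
The approach is to prove only the reverse inclusion $\sigma_T(T)\setminus\sigma_{T_W}(T)\subseteq\pi_{00}(T)$, since Proposition \ref{isolated point of *para} already supplies the inclusion in the other direction. So fix $\lambda=(\lambda_1,\lambda_2)\in\sigma_T(T)\setminus\sigma_{T_W}(T)$; then $T-\lambda I$ is Taylor Fredholm of index $0$, and the task is to show that $\lambda$ is an isolated joint eigenvalue of finite multiplicity. The strategy is the natural $\ast$-paranormal analogue of the Chavan--Curto argument for hyponormal pairs, built on Uchiyama's null-space identity.

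First I would extract the joint eigenspace. Since $\lambda\notin\sigma_{T_e}(T)$, Theorem \ref{thm dash} forces $\lambda\in\sigma_p(T)\cup\sigma_p(T^{\ast})^{\ast}$. Exactly as in the first display of the proof of Proposition \ref{prop*paranormal}, a combination of \cite[Lemma 4]{UCHIYAMA} with the quasitriangular hypothesis gives
\[
M\;:=\;N(T_1-\lambda_1 I)\cap N(T_2-\lambda_2 I)\;=\;N((T_1-\lambda_1 I)^{\ast})\cap N((T_2-\lambda_2 I)^{\ast}),
\]
so in particular $\lambda\in\sigma_p(T)$ and $M$ is a reducing subspace for both $T_1$ and $T_2$. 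The dimension of $M$ equals $\dim H_2(E(H,T-\lambda I))=\dim\ker\delta_2$, which is finite because $T-\lambda I$ is Taylor Fredholm; this already supplies the finite-multiplicity part of $\pi_{00}(T)$.

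For the isolatedness of $\lambda$ I would pass to the orthogonal decomposition $T=T|_{M}\oplus T|_{M^{\perp}}$, legitimate since $M$ reduces $T$. As $T|_{M}=\lambda I_{M}$, the direct-sum splitting of the Koszul complex gives $\sigma_T(T)=\{\lambda\}\cup\sigma_T(T|_{M^{\perp}})$ and shows that $T|_{M^{\perp}}-\lambda I$ is again Taylor Fredholm. The key claim is then $\lambda\notin\sigma_T(T|_{M^{\perp}})$. By the very definition of $M$, any joint eigenvector of $T|_{M^{\perp}}$ at $\lambda$ would lie in $M\cap M^{\perp}=\{0\}$, so $\lambda\notin\sigma_p(T|_{M^{\perp}})$; the same computation applied to the identity $M=\bigcap_{i}N((T_{i}-\lambda_{i})^{\ast})$, combined with the fact that $(T_i|_{M^\perp})^{\ast}=T_i^{\ast}|_{M^{\perp}}$, shows $\lambda\notin\sigma_p((T|_{M^{\perp}})^{\ast})^{\ast}$. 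Invoking Theorem \ref{thm dash} for the commuting pair $T|_{M^{\perp}}$ then eliminates $\lambda$ from all three pieces of its Taylor spectrum, and closedness of $\sigma_T(T|_{M^{\perp}})$ produces a neighbourhood of $\lambda$ meeting $\sigma_T(T)$ only at $\lambda$. Hence $\lambda\in\pi_{00}(T)$.

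The step I expect to require the most care is this last one: one must verify that restriction to the reducing subspace $M^{\perp}$ commutes with the formation of both joint kernels $\bigcap_iN(T_i-\lambda_i I)$ and $\bigcap_iN((T_i-\lambda_i I)^{\ast})$, and also that the quasitriangular hypothesis, which enters only through the identification of $M$ with both intersections on $H$, is \emph{not} needed for $T|_{M^\perp}$ separately. Once those bookkeeping items are in order, the Fredholm and index-0 statements transfer to $T|_{M^{\perp}}$ through the splitting of the Koszul complex, and Theorem \ref{thm dash} closes the argument without further work.
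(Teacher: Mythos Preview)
Your argument is correct, but it is not the route the paper takes. The paper proves the reverse inclusion by contradiction: assuming $\lambda$ is \emph{not} isolated, it picks a sequence $\lambda_n\to\lambda$ inside $\sigma_T(T)\setminus\sigma_{T_W}(T)$, uses Theorem~\ref{thm dash} together with \cite[Lemma~4]{UCHIYAMA} to see that each $\lambda_n$ lies in $\sigma_p(T^*)^*$, extracts an orthonormal sequence $(x_n)$ of joint $T^*$-eigenvectors (which, via the quasitriangular hypothesis, are also joint $T$-eigenvectors), and then estimates $\bigl\|\sum_i(T_i-\lambda_i I)^*(T_i-\lambda_i I)x_n\bigr\|\to 0$. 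This contradicts the Fredholmness of $\sum_i(T_i-\lambda_i I)^*(T_i-\lambda_i I)$, obtained from \cite[Corollary~3.6]{CURTOfredholm}. In short, the paper follows the Chavan--Curto template and rules out accumulation via an approximate-point-spectrum obstruction.

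Your route is more structural: you use the null-space identity $M=\bigcap_i N(T_i-\lambda_iI)=\bigcap_i N((T_i-\lambda_iI)^*)$ to obtain a finite-dimensional \emph{reducing} subspace, split $T=\lambda I_M\oplus T|_{M^\perp}$, and then apply Theorem~\ref{thm dash} a second time---now to $T|_{M^\perp}$---to see directly that $\lambda\notin\sigma_T(T|_{M^\perp})$, hence that $\lambda$ is isolated. This avoids the orthonormal-sequence construction and the operator $\sum_i(T_i-\lambda_iI)^*(T_i-\lambda_iI)$ entirely; the price is that it leans more heavily on the reducing-subspace phenomenon specific to $\ast$-paranormal operators (so it would not transfer as readily to classes where $N(T-\lambda)$ is merely invariant rather than reducing). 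Both arguments invoke the same external inputs---Uchiyama's lemma, the quasitriangular hypothesis, and Dash's decomposition---but in different places.
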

\begin{proof}
	By Proposition \ref{isolated point of *para}, we have $\pi_{00}(T)\subseteq\sigma_T(T)\setminus\sigma_{T_W}(T)$. To prove the reverse inclusion, we consider $\lambda\in\sigma_T(T)\setminus\sigma_{T_W}(T)$. Thus $T-\lambda I$ is Fredholm of index zero. Using \cite[Corollary 3.6]{CURTOfredholm}, we can conclude that $\underset{i=1}{\overset{2}{\sum}}(T_i-\lambda_i I)^*(T_i-\lambda_iI)$ is also Fredholm. If $\lambda$ is isolated, then by Lemma \ref{Lemma *isoloid}, $\lambda\in\pi_{00}(T)$ and the conclusion holds.
	
	On the other hand, if $\lambda$ is not isolated point of $\sigma_T(T)$, then there exist a sequence $(\lambda_n)\subseteq\sigma_T(T)$ such that $\lambda_n\rightarrow\lambda$. As $\sigma_{T_W}(T)$ is closed, we have $(\lambda_n)\subseteq\sigma_T(T)\setminus\sigma_{T_W}(T)$, otherwise $\lambda\in\sigma_{T_W}(T)$. By Theorem \ref{thm dash}, $\sigma_T(T)\setminus\sigma_{T_e}(T)\subseteq\sigma_p(T)\cup\sigma_p(T^*)^*\subseteq\sigma_p(T^*)^*$. Thus $\overline{\lambda_n}\in\sigma_p(T^*)$ for every $n\in\mathbb{N}$. So there exist an orthonormal sequence $(x_n)$ such that $(T_i-\lambda_{n_i})^*x_n=0$ for $i=1,2$. By quasitriangular property, it follows that $(T_i-\lambda_{n_i})x_n=0$ for $i=1,2$. Let $M=\text{max}\{\|(T_1-\lambda_1I)^*\|,\|(T_2-\lambda_2I)^*\|\}$. Then
	\begin{align*}
	\left\|\underset{i=1}{\overset{2}{\sum}}(T_i-\lambda_iI)^*(T_i-\lambda_iI)x_n\right\|&\leq\underset{i=1}{\overset{2}{\sum}}\|(T_i-\lambda_iI)^*(T_i-\lambda_iI)x_n\|\\
	&\leq M\underset{i=1}{\overset{2}{\sum}}\|(T_i-\lambda_iI)x_n\|\\
&\leq M\underset{i=1}{\overset{2}{\sum}}\|(T_i-\lambda_{n_i}I)x_n\|+M\underset{i=1}{\overset{2}{\sum}}|\lambda_{n_i}-\lambda_i|\\
	&=M\underset{i=1}{\overset{2}{\sum}}|\lambda_{n_i}-\lambda_i|\rightarrow 0\text{ as }n\rightarrow \infty.
	\end{align*}
	 Thus $\underset{i=1}{\overset{2}{\sum}}(T_i-\lambda_i I)^*(T_i-\lambda_iI)$ is not Fredholm. This is a contradiction to the assumption that $\lambda\in\sigma_T(T)\setminus\sigma_{T_W}(T)$. Hence $\lambda$ is isolated point of $\sigma_T(T)$ and consequently $\lambda\in\pi_{00}(T).\qedhere$
\end{proof}
If the quasitriangular property is dropped, then Theorem \ref{Weylthm *paranormal} need not hold. The following example illustrates this.
\begin{example}
Let $T:l_2(\mathbb{N})\rightarrow l_2(\mathbb{N})$ be a weighted shift operator defined by
\begin{align*}
T(e_n)=
\begin{cases}
\sqrt{2}e_2,&\text{ if }n=1,\\
e_3,&\text{ if }n=2,\\
2e_{n+1}&\text{ if }n\geq 3.
\end{cases}
\end{align*}
Here $T$ is $\ast$-paranormal operator, but not a paranormal operator. We know that $\sigma_T(T)=\overline{D(0,2)}$, where $D(0,2)=\{\lambda\in\mathbb{C}:|\lambda|<2\}.$

Using projection property, we know that $\sigma_T(T,0)=\overline{D(0,2)}\times\{0\}.$ As $\sigma_p(T)=\emptyset,$ we have $\pi_{00}(T,0)=\emptyset$. It is also easy to see that $\omega(T,0)=\overline{D(0,2)}\times\{0\}$ and $\sigma_{T_W}(T,0)=\partial D(0,2)\times\{0\}.$

If we consider the commuting tuple $(T,0)$ of $\ast$-paranormal operators, then it does not satisfy the Weyl's theorem-I, that is $\sigma_T(T,0)\setminus\sigma_{T_W}(T,0)\neq\pi_{00}(T,0)$. Observe that $(T,0)$ does not have quasitriangular property.
\end{example}
\section{Paranormal operators}
In this section, our main aim is to show that for every analytic function $f$ in a neighbourhood of the spectrum of a commuting pair of paranormal operators $(T_1,T_2)$, $f(T_1,T_2)$ satisfy Weyl's theorem-II.

First, we show that a commuting pair of paranormal operators satisfy the Weyl's theorem-II.
\begin{theorem}\label{Weyls thm}
	Let $T=(T_1,T_2)$ be a commuting pair of paranormal operators in $\mathcal{B}(H)$. Then $\sigma_T(T)\setminus\omega(T)=\pi_{00}(T)$.
\end{theorem}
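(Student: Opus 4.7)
The plan is to establish $\sigma_T(T)\setminus\omega(T)=\pi_{00}(T)$ by proving the two inclusions separately, adapting the strategy of Proposition \ref{isolated point of *para} and Theorem \ref{Weylthm *paranormal} from the $\ast$-paranormal/Taylor--Weyl setting to the paranormal/joint Weyl setting.

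For $\pi_{00}(T)\subseteq\sigma_T(T)\setminus\omega(T)$, given $\lambda\in\pi_{00}(T)$, I use Corollary \ref{matrix rep} to obtain, up to joint similarity, a block decomposition $T=T|_{M_1}\oplus A$ with $\sigma_T(T|_{M_1})=\{\lambda\}$, $\sigma_T(A)=\sigma_T(T)\setminus\{\lambda\}$, and $M_1$ finite-dimensional. Arguing as in Lemma \ref{Lemma *isoloid}, each $T_i|_{M_1}$ is paranormal with spectrum $\{\lambda_i\}$, forcing $T_i|_{M_1}=\lambda_i I_{M_1}$. Let $P$ denote the (in general non-orthogonal) idempotent onto $M_1$ along $M_2$; it has finite rank and commutes with each $T_i$. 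For any $(\mu_1,\mu_2)\in\mathbb{C}^2\setminus\{(0,0)\}$, the commuting pair of compact perturbations $K_i:=\mu_i P$ makes $T+K-\lambda I$ split (under the similarity) as $(\mu_1 I,\mu_2 I)\oplus(A-\lambda I)$, which is Taylor invertible on each summand. Hence $\lambda\notin\omega(T)$.

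For $\sigma_T(T)\setminus\omega(T)\subseteq\pi_{00}(T)$, fix $\lambda\in\sigma_T(T)\setminus\omega(T)$. Lemma \ref{lemma joint weyl spectrum} gives $\lambda\notin\sigma_{T_W}(T)$, so $T-\lambda I$ is Taylor Fredholm of index zero; via Vasilescu's $\hat T$-characterization, both $\sum_i(T_i-\lambda_i)^*(T_i-\lambda_i)$ and $\sum_i(T_i-\lambda_i)(T_i-\lambda_i)^*$ are then Fredholm. It suffices to show $\lambda$ is isolated in $\sigma_T(T)$, for then Lemma \ref{Lemma *isoloid} gives $\lambda\in\sigma_p(T)$ and finite-dimensionality of the top Koszul homology yields $\lambda\in\pi_{00}(T)$. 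Suppose instead that $\lambda$ is an accumulation point and pick $\lambda^{(n)}\to\lambda$ in $\sigma_T(T)\setminus\{\lambda\}$; closedness of $\sigma_{T_W}(T)$ lets me assume $\lambda^{(n)}\in\sigma_T(T)\setminus\sigma_{T_e}(T)$, so Theorem \ref{thm dash} puts $\lambda^{(n)}\in\sigma_p(T)\cup\sigma_p(T^*)^*$. If infinitely many $\lambda^{(n)}$ lie in $\sigma_p(T)$, I choose unit joint eigenvectors $x_n$ of $T$ with the $T_1$-eigenvalues distinct along a subsequence; orthogonality of eigenvectors of the paranormal operator $T_1$ at distinct eigenvalues makes $\{x_n\}$ orthonormal, and $\|(T_i-\lambda_i)x_n\|=|\lambda^{(n)}_i-\lambda_i|\to 0$ produces a Weyl singular sequence for $\sum_i(T_i-\lambda_i)^*(T_i-\lambda_i)$, contradicting its Fredholmness. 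Otherwise infinitely many $\lambda^{(n)}\in\sigma_p(T^*)^*$, and I choose unit joint $T^*$-eigenvectors $y_n$ with $T_i^*y_n=\bar\lambda^{(n)}_i y_n$; the finite-dimensional subspace $N:=\bigcap_i N((T_i-\lambda_i)^*)$ is the kernel of the Fredholm operator $\sum_i(T_i-\lambda_i)(T_i-\lambda_i)^*$, which is therefore bounded below on $N^\perp$, and combined with $(T_i-\lambda_i)^*(I-P_N)y_n=(\bar\lambda^{(n)}_i-\bar\lambda_i)y_n\to 0$, a projection and renormalization argument produces a Weyl singular sequence in $N^\perp$, again contradicting Fredholmness.

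The main obstacle is this last case. Theorem \ref{Weylthm *paranormal} handles it by invoking Uchiyama's identity $N(T_i-\lambda_i)=N((T_i-\lambda_i)^*)$ for $\ast$-paranormal operators together with quasitriangularity, thereby collapsing $\sigma_p(T^*)^*$ into $\sigma_p(T)$. No such identity is available for paranormal operators, and $T^*$-eigenvectors at distinct eigenvalues need not be orthogonal, so the clean orthonormalization used in the first case fails. Producing orthonormality indirectly — through the finite-dimensional trap $N$, the Fredholm lower bound on $N^\perp$, and the linear independence of $\{y_n\}$ coming from the distinctness of the $\bar\lambda^{(n)}$ — is the delicate heart of the argument.
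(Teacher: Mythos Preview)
Your forward inclusion $\pi_{00}(T)\subseteq\sigma_T(T)\setminus\omega(T)$ is fine and matches the paper's: both build a finite-rank commuting perturbation out of the idempotent onto the eigenspace $M_1$, and the paper's choice $K=-TP_{M_1}$ (or $P_{M_1}$ when $\lambda=0$) is just a specific instance of your $K_i=\mu_iP$.

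The reverse inclusion, however, has a genuine gap, and it is not a matter of missing details. After invoking Lemma~\ref{lemma joint weyl spectrum} you use only the consequence $\lambda\notin\sigma_{T_W}(T)$, i.e.\ Taylor Fredholmness of $T-\lambda I$; the hypothesis $\lambda\notin\omega(T)$ never reappears. If your argument were valid it would therefore prove $\sigma_T(T)\setminus\sigma_{T_W}(T)\subseteq\pi_{00}(T)$, that is, Weyl's theorem-I for commuting paranormal pairs. But the paper's own Remark immediately after this theorem shows that Weyl's theorem-I \emph{fails} for the paranormal pair $(R,R)$ with $R$ the unilateral shift: $\sigma_T(R,R)\setminus\sigma_{T_W}(R,R)=\overline{D}\times\overline{D}\setminus(\partial D\times\partial D)$ while $\pi_{00}(R,R)=\emptyset$. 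So your argument must break, and it breaks exactly where you flag it as delicate. In the $\sigma_p(T^*)^*$ branch, setting $A=\sum_i(T_i-\lambda_i)(T_i-\lambda_i)^*$ and $N=N(A)$, the identity $(T_i-\lambda_i)^*(I-P_N)y_n=(\bar\lambda^{(n)}_i-\bar\lambda_i)y_n$ gives $\langle A y_n,y_n\rangle\to 0$, and since $A\ge cP_{N^\perp}$ this forces $\|(I-P_N)y_n\|\to 0$. After renormalisation one gets unit vectors $z_n\in N^\perp$, but then $\|Az_n\|\ge c$ by the very lower bound you are using; no Weyl singular sequence in $N^\perp$ can exist. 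Linear independence of the $y_n$ does not help, because nothing prevents infinitely many linearly independent unit vectors from clustering near a fixed finite-dimensional subspace (as indeed happens for the eigenvectors $(1,\bar\mu,\bar\mu^2,\ldots)$ of $R^*$ as $\mu\to 0$).

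The paper avoids this by exploiting the full strength of $\lambda\notin\omega(T)$: it takes a commuting compact pair $K=(K_1,K_2)$ with $\lambda\notin\sigma_T(T+K)$ and works with the \emph{invertible} Vasilescu matrix $\widehat{T+K-\lambda I}$ on $H\oplus H$. Applying it to $(x_n,0)$ or $(0,w_n)$, the $T$-part vanishes and only $K_ix_n$ or $K_i^*w_n$ survives; compactness of the $K_i$ then turns weak convergence of an orthonormal sequence into norm convergence to $0$, and invertibility forces the sequence itself to $0$, a contradiction. This use of the compact perturbation is precisely the extra ingredient separating $\omega(T)$ from $\sigma_{T_W}(T)$, and it is what your proposal is missing.
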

\begin{proof}
	Let $\lambda\in\pi_{00}(T)$. Then there exist $T$-invariant subspaces $M_1$ and $M_2$ such that $\sigma_T(T|_{M_1})=\{\lambda\}$ and $\sigma_T(T|_{M_2})=\sigma_T(T)\setminus\{\lambda\}$. By Lemma \ref{Lemma *isoloid}, $M_1=\underset{i=1}{\overset{2}{\cap}}N(T_i-\lambda_i I)$.
	
	If $\lambda\ne (0,0)$, then
	\[
	T-TP_{M_1}=
	\begin{bmatrix}
	0&0\\
	0& T|_{M_2}
	\end{bmatrix}.
	\]
	Thus $\lambda\notin\sigma_T(T-TP_{M_1})$. Here $TP_{M_1}$ is a finite rank operator, hence $\lambda\notin\omega(T)$.
	
	If $\lambda=(0,0)$, then
	\[
	T+P_{M_1}
	=\begin{bmatrix}
	I_{M_1}&0\\
	0&T|_{M_2}
	\end{bmatrix}.
	\]
It is easy to see that $(0,0)\notin\sigma_T(T+P_{M_1})$. Hence $\lambda=(0,0)\notin\omega(T)$. In both the cases $\lambda\in\sigma_T(T)\setminus\omega(T)$.

Conversely, let $\lambda\in\sigma_T(T)\setminus\omega(T)$. By definition of $\omega(T)$, there exist a pair of compact operators $K_1$ and $K_2$ such that if $K=(K_1,K_2)$, then $T+K$ is a commuting tuple and $\lambda\notin\sigma_T(T+K)$.

 If $\lambda$ is isolated point of $\sigma(T)$, then $\lambda\in\sigma_p(T)$, by Lemma \ref{Lemma *isoloid}. We have to show that $\underset{i=1}{\overset{2}{\cap}}N(T_i-\lambda_iI)$ is finite dimensional. On the contrary, if $\underset{i=1}{\overset{2}{\cap}}N(T_i-\lambda_iI)$ is infinite dimensional, then there exist an orthonormal sequence $(x_n)\subseteq H$ such that $(T_i-\lambda_iI)x_n=0$, for $i=1,2$. As $(x_n)$ converges to zero weakly, we get $(K_1x_n, K_2x_n)$ converges to zero strongly. Consider
 \[
 \widehat{(T+K-\lambda I)}(x_{n},0)=
 \begin{bmatrix}
 T_1+K_1-\lambda_1I&(T_2+K_2-\lambda_2I)^*\\
 T_2+K_2-\lambda_2I&(T_1+K_1-\lambda_1I)^*
 \end{bmatrix}
 \begin{bmatrix}
 x_{n}\\
 0
 \end{bmatrix}
 =
 \begin{bmatrix}
 K_1x_{n}\\
 K_2x_{n}
 \end{bmatrix},
 \]
 where $~~~\widehat{}~~~$ is defined as in Equation \ref{ Vas matrix representation }.
 Then
 \begin{align}\label{eqnmat}
 \|\widehat{(T+K-\lambda I)}(x_{n},0)\|=\|(K_1x_{n},K_2x_{n})\|.
 \end{align}
 The right hand side of Equation \ref{eqnmat} converges to zero as $n\rightarrow\infty$. Thus $(x_{n},0)$ converges to $(0,0)$, which contradicts the fact that $(x_{n})$ is an orthonormal sequence. Hence $\underset{i=1}{\overset{2}{\cap}}N(T_i-\lambda_i I)$ is finite dimensional.

 If $\lambda$ is not an isolated point of $\sigma_T(T)$, then there exist a sequence $(\lambda_n)\subseteq\sigma_T(T)$ such that $(\lambda_n)$ convereges to $\lambda$ as $n\rightarrow\infty$. Using the fact that $\omega(T)$ is closed, we assume that $(\lambda_n)\subseteq\sigma_T(T)\setminus\omega(T)\subseteq\sigma_p(T)\cup\sigma_p(T^*)^*$, by Theorem \ref{thm dash}.

 If $(\lambda_n)\subseteq\sigma_p(T)$, then there exist an orthonormal sequence $(y_n)$ such that $(T_i-\lambda_{n_i})y_n=0$ for $i=1,2$. Consider the sequence $(y_n,0)\subseteq H\oplus H$. Now using the compactness of $K_1$ and $K_2$ we know that $(K_1y_{n},K_2y_{n})$ converges to $(0,0)$. It is easy to see that
 \begin{align*}
 \|\widehat{(T+K-\lambda_{n} I)}(y_{n},0)\|\rightarrow 0\text{ as }n\rightarrow\infty.
 \end{align*}
 Further, we have
 \begin{align*}
 \|\widehat{(T+K-\lambda I)}(y_{n},0)\|\leq\|\widehat{(T+K-\lambda_{n} I)}(y_{n},0)\|+|\lambda_{n}-\lambda|.
 \end{align*}
 The right hand side of above equation goes to zero as $n\rightarrow\infty$.
 Again we arrives at a contradiction as $(y_{n},0)$ converges to $(0,0)$ as $n\rightarrow\infty$.

 Similarly, if $(\lambda_n)\subseteq\sigma_p(T^*)^*$, then there exist an orthonormal sequence $(w_n)$ such that $(T_i-\lambda_{n_i}I)^*w_n=0$, for $i=1,2$. On the similar lines we can show that $(0,w_n)$ converges to $(0,0)$ as $n\rightarrow 0$, which leads to a contradiction. If $(\lambda_n)$ is not contained in either of the sets $\sigma_p(T)$ and $\sigma_p(T^*)$, then we get a subsequence of $(\lambda_n)$, which is contained in one of the sets. Further we deal with the subsequence and following the same argument as above we end up with a contradiction. Hence $\lambda\in\pi_{00}(T).\qedhere$
\end{proof}
\begin{remark}
 Weyl's theorem-I need not hold for paranormal operators. Consider the right shift operator $R:\ell_2(\mathbb{N})\rightarrow \ell_2(\mathbb{N})$, defined by $R(x_1,x_2,\ldots)=(0,x_1,x_2,\ldots)$, for all $(x_n)\in\ell_2(\mathbb{N})$. Then
 \begin{align*}
 \sigma_T(R,R)=&\overline{D(0,1)}\times\overline{D(0,1)},\\
 \omega(R,R)=&\overline{D(0,1)}\times\overline{D(0,1)},\\
 \sigma_p(R,R)=&\emptyset\\
 \sigma_{T_W}(R,R)=&\partial D(0,1)\times\partial D(0,1).
 \end{align*}
 Clearly $(R,R)$ satisfies Weyl's theorem-II but not Weyl's theorem-I.	
\end{remark}

The rest of the section is devoted to the study of spectral properties of analytic functional calculus for a pair of paranormal operators.
\begin{lemma}\label{iso eigenvalue}
	Let $T=(T_1,T_2)$ be a commuting pair of paranormal operators in $\mathcal{B}(H)$. If $\delta=(\delta_1,\delta_2)$ is an isolated point of $\sigma_T(T)$, then $f(\delta)$ is an isolated joint eigenvalue of $f(T)$, for every $f\in H(\sigma_T(T))$.
\end{lemma}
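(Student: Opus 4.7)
The plan is to use the spectral splitting provided by Theorem \ref{thm disjoint spectra} at the isolated point $\delta$, exploit the paranormal-isoloid property to force $T$ to act as the scalar pair $\delta$ on the $\delta$-piece, and then transport everything through the analytic functional calculus.

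First, since $\delta$ is an isolated point of $\sigma_T(T)$, I apply Theorem \ref{thm disjoint spectra} with $K_1=\{\delta\}$ and $K_2=\sigma_T(T)\setminus\{\delta\}$ to obtain $T$-invariant subspaces $M_1,M_2$ with $M_1\cap M_2=\{0\}$, $M_1+M_2=H$, $\sigma_T(T|_{M_1})=\{\delta\}$, and $\sigma_T(T|_{M_2})\subseteq\sigma_T(T)\setminus\{\delta\}$. The restriction of a paranormal operator to an invariant subspace remains paranormal, so $T|_{M_1}=(T_1|_{M_1},T_2|_{M_1})$ is a commuting pair of paranormal operators. By the projection property of the Taylor spectrum, each $\sigma(T_i|_{M_1})=\{\delta_i\}$, and the paranormal isoloid result \cite[Lemma 3.4]{UCHIYAMAp} (invoked exactly as in the proof of Lemma \ref{Lemma *isoloid}) yields $T_i|_{M_1}=\delta_i I_{M_1}$ for $i=1,2$.

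Next, Corollary \ref{matrix rep} says $T$ is jointly similar (via some invertible $W$) to
\[
\left(\begin{bmatrix}\delta_1 I_{M_1}&0\\0&A_1\end{bmatrix},\begin{bmatrix}\delta_2 I_{M_1}&0\\0&A_2\end{bmatrix}\right),
\]
where $A=(A_1,A_2)$ is a commuting pair on $M_1^{\perp}$ with $\sigma_T(A)=\sigma_T(T)\setminus\{\delta\}$. Because the analytic functional calculus is compatible with both joint similarity and block-diagonal direct sums, $f(T)$ is jointly similar to $f(\delta)I_{M_1}\oplus f(A)$. Consequently, for every non-zero $x\in M_1$, the vector $W^{-1}x$ is a joint eigenvector of $f(T)$ at eigenvalue $f(\delta)$; in particular $f(\delta)\in\sigma_p(f(T))$.

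For the isolation assertion, I use the spectral mapping theorem (Theorem 5.19 in the excerpt) to write
\[
\sigma_T(f(T))=f(\sigma_T(T))=\{f(\delta)\}\cup f\bigl(\sigma_T(T)\setminus\{\delta\}\bigr),
\]
and observe that $\sigma_T(T)\setminus\{\delta\}$ is compact in $\mathbb{C}^2$, so its image under the continuous map $f$ is closed. Hence, provided $f(\delta)\notin f(\sigma_T(T)\setminus\{\delta\})$, the point $f(\delta)$ lies at positive distance from the rest of $\sigma_T(f(T))$ and is therefore isolated. The main obstacle is precisely this last non-collision: a priori $f$ could identify $\delta$ with some other spectral point $\mu\neq\delta$. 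To rule this out, the natural route is to pick a small open neighbourhood $U$ of $\delta$ with $U\cap\sigma_T(T)=\{\delta\}$, and then use local invertibility of the germ of $f$ at $\delta$, or replace $f$ by an analytic function that has the same germ at $\delta$ but takes distinct values on the two disjoint pieces of the spectrum (this is possible by the idempotent/separating-function machinery of the Taylor calculus supplied by the disjoint-spectra decomposition already used in Step 1). Pinning down this local separation step is the delicate part of the argument.
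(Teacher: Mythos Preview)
Your approach coincides with the paper's through the eigenvalue assertion: split $\sigma_T(T)$ at $\delta$ via Theorem~\ref{thm disjoint spectra}, use the paranormal isoloid property to force $T_i|_{M_1}=\delta_i I_{M_1}$, pass to the block-diagonal model of Corollary~\ref{matrix rep}, and read off that $f(T)$ is similar to $f(\delta)I_{M_1}\oplus f(A)$, whence $f(\delta)\in\sigma_p(f(T))$. That part is correct and essentially identical to the paper's argument.

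The obstruction you isolate in your last paragraph is genuine and, in fact, cannot be removed: the isolation claim in the lemma is false as stated. Take $T_1$ a compact normal (hence paranormal) diagonal operator on $l_2(\mathbb{N})$ with simple eigenvalues $1/n$ ($n\ge1$), set $T_2=0$, and let $f(z_1,z_2)=z_1^2-z_1$. Then $\sigma_T(T)=\sigma(T_1)\times\{0\}$, the point $\delta=(1,0)$ is isolated in $\sigma_T(T)$, yet $f(\delta)=0$ is the limit of $f(1/n,0)=1/n^2-1/n\in\sigma_T(f(T))$ and is therefore not isolated in $\sigma_T(f(T))$. Neither of your proposed repairs can rescue this: nothing in the hypotheses forces $f$ to be locally injective at $\delta$, and replacing $f$ by a separating function proves the statement for a different map than the one given.

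For comparison, the paper's own proof glosses over exactly this point: it writes ``since $\delta\notin\sigma_T(A)$, this gives $f(\delta)\notin f(\sigma_T(A))$,'' which is a non sequitur (the implication would require injectivity of $f$ on $\sigma_T(T)$). What is genuinely salvageable from the argument is the eigenvalue assertion together with the containment $N(T-\delta I)\subseteq N\bigl(f(T)-f(\delta)I\bigr)$; the later applications in the paper can be made to run on these weaker facts, but you should not attempt to close the isolation step.
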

\begin{proof}
	Lemma \ref{Lemma *isoloid} implies that $\delta$ is a joint eigenvalue of $T$.
	By Corollary \ref{matrix rep}, we know that $T$ is similar to the following matrix;
	\[
	\begin{bmatrix}
		T|_M&0\\
	0& A
	\end{bmatrix},\text{ for some }A\in\mathcal{B}(M^{\perp}),
	\]
	 where $M$ is an invariant subspace of $T,\, \sigma_T(T|_M)=\{\delta\}$ and $\sigma_T(A)=\sigma_T(T)\setminus\{\delta\}$. Lemma \ref{Lemma *isoloid} implies that $M=N(T_1-\delta_1I)\cap N(T_2-\delta_2 I)$. As $f(T|_M)=f(\delta)I_M$, it follows that $f(T)$ is similar to
	 \[
	 \begin{bmatrix}
	 f(T|_M)&0\\
	 0&f(A)
	 \end{bmatrix}
	 =
	 \begin{bmatrix}
	 f(\delta) I_M&0\\
	 0&f(A)
	 \end{bmatrix}.
	 \]
Thus $f(\delta)$ is a joint eigenvalue of $f(T)$ and dim$N(f(T)-f(\delta)I)=\text{dim}M$. Since $\delta\notin\sigma_T(A)$, this gives $f(\delta)\notin f(\sigma_T(A))=\sigma_T(f(A))$. Hence $f(\delta)$ is an isolated joint eigenvalue of $f(T)$ with the same multiplicity as $\delta$ in $\sigma_T(T)$.
\end{proof}
\begin{remark}\label{rem eigenvalues}
	Let $T=(T_1,T_2)$ be as defined in Lemma \ref{iso eigenvalue}. If $\delta$ is an isolated joint eigenvalue of $T$, then $f(\delta)$ is an isolated joint eigenvalue of $f(T)$. Moreover $\text{dim}\left(N(T-\delta I) \right)=\text{dim}\left(N(f(T)-f(\delta)I)\right)$.
\end{remark}
\begin{lemma}\label{lemma eigenvalue}
	Let $T=(T_1,T_2)$ be a commuting pair of paranormal operators in $\mathcal{B}(H)$. If $\delta$ is a joint eigenvalue of $T$ then $f(\delta)$ is a joint eigenvalue of $f(T)$ for every $f\in H(\sigma_T(T))$.
\end{lemma}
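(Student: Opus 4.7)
The plan is to prove the scalar statement: for any $g \in H(\sigma_T(T))$, if $x$ is a joint eigenvector of $T$ with eigenvalue $\delta = (\delta_1, \delta_2)$, then $g(T)x = g(\delta) x$; applied component-wise this yields the lemma. So let $x \in H$ be nonzero with $T_1 x = \delta_1 x$ and $T_2 x = \delta_2 x$, and set $M := \mathbb{C} x$. Then $M$ is a one-dimensional $T$-invariant subspace of $H$, and the restricted pair $T|_M = (\delta_1 I_M, \delta_2 I_M)$ consists of scalar operators whose Taylor spectrum is $\{\delta\} \subseteq \sigma_T(T)$; in particular $g$ is analytic on a neighbourhood of $\sigma_T(T|_M)$.

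I would then invoke the standard compatibility of the Taylor analytic functional calculus with invariant subspaces: if $M$ is a closed $T$-invariant subspace and $g \in H(\sigma_T(T))$, then $g(T)$ leaves $M$ invariant and $g(T)|_M = g(T|_M)$. Since $T|_M$ is a scalar pair, $g(T|_M) = g(\delta) I_M$, whence
\[
g(T) x \;=\; g(T)|_M\, x \;=\; g(\delta) I_M\, x \;=\; g(\delta)\, x,
\]
as required. Applying this to each component of the tuple $f = (f_1, f_2)$, the vector $x$ is a joint eigenvector of $f(T)$ with joint eigenvalue $f(\delta)$.

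The main obstacle is justifying the invariant-subspace compatibility $g(T)|_M = g(T|_M)$. For polynomials this is immediate from the multiplicativity and linearity of the calculus. For a general analytic $g$ one uses the integral representation
\[
g(T) \;=\; \frac{1}{(2\pi i)^n}\int_{\partial \Delta} g(w)\, M(w - T)\, dw
\]
recalled in Section~2, together with the observation that the Koszul complex of $w - T$ restricts naturally to that of $w - T|_M$, so $M(w - T)$ preserves $M$ and agrees there with $M(w - T|_M)$ for every $w \notin \sigma_T(T)$; integrating then gives $g(T)|_M = g(T|_M)$. Alternatively this may be quoted as a standard feature of the Taylor calculus, see \cite{VASILESCU2}. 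It is worth noting that the paranormal hypothesis plays no role in this particular lemma; the argument works for any commuting pair.
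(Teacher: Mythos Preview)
Your argument is correct and takes a genuinely different route from the paper. The paper proceeds by a case split: if $\delta$ is isolated it quotes Lemma~\ref{iso eigenvalue}; if not, it invokes a Weierstrass-preparation–type factorisation of $f(z_1,z_2)-f(\delta_1,\delta_2)$ near $\delta$ and then evaluates this factorisation at the restricted tuple $T|_M$ on a suitable invariant subspace, using the joint eigenvector to kill the polynomial factor. By contrast, you bypass all of this by applying the functoriality of the Taylor analytic calculus with respect to the one-dimensional invariant subspace $\mathbb{C}x$ (equivalently, the intertwining property $f(T)A=Af(S)$ applied to $A:\mathbb{C}\to H$, $c\mapsto cx$, with $S=(\delta_1,\delta_2)$ on $\mathbb{C}$). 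This is cleaner, avoids the local-versus-global issues inherent in the preparation-theorem argument, and, as you correctly observe, shows that the paranormal hypothesis is irrelevant for this particular lemma. The one point that deserves an explicit citation is the compatibility $g(T)|_M=g(T|_M)$; note that for a general closed invariant subspace one does \emph{not} have $\sigma_T(T|_M)\subseteq\sigma_T(T)$, so it is essential to your argument that here $\sigma_T(T|_M)=\{\delta\}\subseteq\sigma_T(T)$, which guarantees $g$ is analytic on a neighbourhood of $\sigma_T(T|_M)$ and the naturality applies. With that caveat made explicit, your proof is complete.
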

\begin{proof}
	If $\delta$ is an isolated joint eigenvalue of $T$, then $f(\delta)$ is also a joint eigenvalue of $f(T)$, by Lemma \ref{iso eigenvalue}.
	
	On the other hand, assume that $\delta$ is not an isolated point of $\sigma_T(T)$. Without loss of generality, we assume that $\sigma_T(T)$ is disjoint union of $K_1,\,K_2$ and $\delta\in K_1$. Then Theorem \ref{thm disjoint spectra} implies that there exist two $T$-invariant subspaces $M,\,N\subseteq H$ such that $\sigma_T(T|_M)=K_1$ and $\sigma_T(T|_N)=K_2$. Let $U_1,\,U_2$ be two open sets in $\mathbb{C}^2$ containing $K_1,\,K_2$, respectively and $f$ is an analytic function in $U_1\cup U_2$. We observe that $(\delta_1,\delta_2)$ is a zero of $f(z_1,z_2)-f(\delta_1,\delta_2)$. The folllowing two cases exhaust all the possiblities.
	
	Case $(1)$: There exist a neighbourhood $V$ of $(\delta_1,\delta_2)$ such that $f(z_1,z_2)-f(\delta_1,\delta_2)=0$ for all $(z_1,z_2)\in V$:\\ By \cite[Theorem 3.7, Page 78]{CONWAY}, $f(z_1,z_2)-f(\delta_1,\delta_2)=0$ for all $(z_1,z_2)\in U_1$. Consequently $f(T|_M)-f(\delta)I_M=0$. Hence $f(\delta)$ is a joint eigenvalue of $f(T)$.
	
	Case $(2)$: There exist a neighbourhood $V_2$ of $\delta_2$ such that $f(\delta_1,z_2)-f(\delta_1,\delta_2)\ne 0$ for all $z_2\in V_2$:\\ By \cite[Theorem 1.2.5, Page 18]{GILBERT} we have
\begin{align*}
f(z_1,z_2)-f(\delta_1,\delta_2)=h(z_1,z_2)[&(z_2-\delta_2)^m+(z_2-\delta_2)^{m-1}(z_1-\delta_1)g_1(z_1)+\ldots\\
&+(z_1-\delta_1)g_m(z_1)],
\end{align*}
	 for all $(z_1,z_2)\in U_1$ and for some $m\in\mathbb{N}$. Here $g_i(z_1)$ is an analytic function in a neighbourhood of $\delta_1$ and $h(z_1,z_2)$ is analytic function in $U_1$ such that $h(\delta_1,\delta_2)\ne 0$. Hence
	\begin{align*}
		f(T|_M)-f(\delta)I_M=&h(T_1|_M,T_2|_M)[(T_2|_M-\delta_2I_M)^m+(T_2|_M-\delta_2I_M)^{m-1}\\
		&(T_1|_M-\delta_1I_M)g_1(T_1|_M)+\ldots+(T_1|_M-\delta_1I_M)g_{m}(T_1|_M) ].
	\end{align*}
As $(\delta_1,\delta_2)\in\sigma_p(T_1|_M,T_2|_M)$, there exist some $x\in H$ such that $T_1x=\delta_1x$ and $T_2x=\delta_2x$. Thus we have $(f(T|_M)-f(\delta)I_M)x=0$. Hence $f(\delta)$ is a joint eigenvalue of $f(T)$.
\end{proof}
\begin{note}
	Let $T=(T_1,T_2)$ be as defined in Lemma \ref{lemma eigenvalue}. It is clear from the proofs of Lemma \ref{iso eigenvalue} and Lemma \ref{lemma eigenvalue} that if $(\delta_1,\delta_2)\in\sigma_p(T_1,T_2)$ and $x\in H$ such that $(T_i-\delta_iI)x=0$ for $i=1,2$, then $(f(T_1,T_2)-f(\delta_1,\delta_2)I)x=0$, that is $N(T-\delta I)\subseteq N\left(f(T)-f(\delta)I\right)$.
\end{note}
Next, we study the properties of image of non isolated point of $\sigma_T(T)$ under every function $f\in H(\sigma_T(T))$.
\begin{proposition}\label{prop not isolated}
	Let $T=(T_1,T_2)$ be a commuting pair of paranormal operators in $\mathcal{B}(H)$. If $\delta\in\sigma_T(T)$ is not an isolated point of $\sigma_T(T)$ then  one of the following holds.
	\begin{enumerate}
		\item $f(\delta)$ is not an isolated point of $\sigma_T(f(T))$.
		\item $f(\delta)$ is a joint eigenvalue of $f(T)$ with infinite multiplicity.
	\end{enumerate}
\end{proposition}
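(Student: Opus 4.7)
The plan is to prove the contrapositive: assume $f(\delta)$ is an isolated point of $\sigma_T(f(T))$, and deduce that $f(\delta)$ is a joint eigenvalue of $f(T)$ with infinite multiplicity. The first move is to invoke the spectral mapping theorem to write $\sigma_T(f(T)) = f(\sigma_T(T))$. Since $\delta$ is not isolated, there is a sequence $(\delta_n) \subseteq \sigma_T(T) \setminus \{\delta\}$ with $\delta_n \to \delta$; continuity of $f$ combined with the assumed isolation of $f(\delta)$ in $f(\sigma_T(T))$ forces $f(\delta_n) = f(\delta)$ for all sufficiently large $n$. Set
\[
K_1 = \{\zeta \in \sigma_T(T) : f(\zeta) = f(\delta)\}, \qquad K_2 = \sigma_T(T) \setminus K_1.
\]
Because a small neighbourhood of $f(\delta)$ meets $f(\sigma_T(T))$ only at $f(\delta)$ and $f$ is continuous, $K_1$ is clopen in $\sigma_T(T)$, so $K_1$ and $K_2$ are disjoint compact sets.

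Apply Theorem \ref{thm disjoint spectra} together with Corollary \ref{matrix rep}: there exist $T$-invariant subspaces $M_1, M_2$ such that $T$ is jointly similar to the block-diagonal pair $T|_{M_1} \oplus A$, with $\sigma_T(T|_{M_1}) = K_1$ and $\sigma_T(A) = K_2$. The analytic functional calculus then presents $f(T)$ as jointly similar to $f(T|_{M_1}) \oplus f(A)$, where $\sigma_T(f(T|_{M_1})) = f(K_1) = \{f(\delta)\}$ and $f(\delta) \notin \sigma_T(f(A))$. Since $f(A) - f(\delta)I$ is Taylor invertible on $M_2$, the joint null space of $f(T) - f(\delta)I$ corresponds, under the similarity, to the joint null space of $f(T|_{M_1}) - f(\delta)I_{M_1}$ inside $M_1$. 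As $K_1$ contains $\delta$ together with the tail of $(\delta_n)$, it is infinite, and hence $M_1$ is infinite-dimensional, because a commuting pair on a finite-dimensional space has a finite Taylor spectrum.

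It remains to produce infinitely many linearly independent vectors in the joint null space of $f(T|_{M_1}) - f(\delta)I_{M_1}$. The restriction $T|_{M_1}$ is again a commuting pair of paranormal operators, since paranormality passes to invariant subspaces directly from the defining inequality. Consequently Lemma \ref{Lemma *isoloid} applies and asserts that every isolated point of $K_1$ is a joint eigenvalue of $T|_{M_1}$, while the Note following Lemma \ref{lemma eigenvalue} transfers each joint eigenvector $x$ belonging to an isolated $\zeta \in K_1$ into $N(f(T|_{M_1}) - f(\zeta)I_{M_1}) = N(f(T|_{M_1}) - f(\delta)I_{M_1})$. Eigenvectors for distinct joint eigenvalues are linearly independent, so if $K_1$ possesses infinitely many isolated points, or if some isolated point of $K_1$ has an infinite-dimensional joint eigenspace, the proof is complete.

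The main obstacle is the residual case in which $K_1$ has only finitely many isolated points, each with finite-dimensional joint eigenspace, so that the set $K_1'$ of limit points is a non-empty perfect compact set. Here one applies Theorem \ref{thm disjoint spectra} a second time to the pair $T|_{M_1}$ with the clopen decomposition $K_1 = (\text{isolated points}) \sqcup K_1'$, producing an infinite-dimensional invariant summand $N \subseteq M_1$ with $\sigma_T(T|_N) = K_1'$ and $f(K_1') = \{f(\delta)\}$. The delicate step is to exhibit infinite-dimensional joint kernel for $f(T|_N) - f(\delta)I_N$ when $T|_N$ has no isolated spectral points; I expect this to be handled by combining Theorem \ref{thm dash} (placing each $\zeta \in K_1'$ in $\sigma_p$, $\sigma_{T_e}$, or $\sigma_p^{*}$ of the restriction) with a perturbation/orthonormal-sequence argument in the spirit of the converse half of the proof of Theorem \ref{Weyls thm}, exploiting that $f(T)^{*} = \bar f(T^{*})$ to convert dual eigenvectors at points of $K_1'$ into elements of the joint kernel of $f(T|_N) - f(\delta) I_N$.
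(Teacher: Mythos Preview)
Your argument is incomplete: the ``residual case'' where $K_1$ has a non-empty perfect part $K_1'$ is precisely the heart of the matter, and you do not prove it. Knowing only that $\sigma_T\bigl(f(T|_N)\bigr)=\{f(\delta)\}$ does not by itself force the joint kernel of $f(T|_N)-f(\delta)I_N$ to be non-trivial, let alone infinite-dimensional, because $f(T|_N)$ need not be paranormal. Your proposed rescue via Theorem \ref{thm dash} and the identity $f(T)^{*}=\bar f(T^{*})$ does not work as stated: a vector $x$ with $(T_i-\zeta_i)^{*}x=0$ produces $x\in N\bigl((f(T)-f(\delta)I)^{*}\bigr)$, i.e.\ a co-kernel element, not an element of the joint kernel of $f(T)-f(\delta)I$; there is no mechanism here to pass from co-kernel to kernel for a non-normal tuple. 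So the ``I expect'' clause is both unexecuted and, in the form suggested, unworkable.

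The paper avoids this obstacle by exploiting analyticity of $f$ much more strongly than you do. Rather than merely recording that $f$ is constant on the level set $K_1\subseteq\sigma_T(T)$, the paper applies the one-variable identity theorem coordinate\-wise: from $f(\delta_n)=f(\delta)$ along a sequence in $\sigma(T_1)$ (resp.\ $\sigma(T_2)$) it deduces that $f$ is constant on a genuine open neighbourhood $U_1\times U_2$ of $\delta$ in $\mathbb{C}^2$, hence on the entire connected component of $\sigma_T(T)$ containing $\delta$. This gives $f(T|_{M_1})=f(\delta)I_{M_1}$ outright on an infinite-dimensional invariant subspace, so all of $M_1$ is joint eigenspace. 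When one coordinate $\delta_2$ is isolated in $\sigma(T_2)$, the paper instead uses a Weierstrass-type factorisation $f(z_1,z_2)-f(\delta)=(z_2-\delta_2)^{m}h(z_1,z_2)$ together with Fredholm theory for the single paranormal operator $T_2-\delta_2 I$ to force $N(T_2-\delta_2 I)$ to be infinite-dimensional, which again yields the conclusion. Your clopen level-set decomposition is a reasonable opening move, but without the identity-theorem step you are left analysing the kernel of an operator that is only quasi-nilpotent in spectrum, and that is where the argument stalls.
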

\begin{proof}
	Let $\delta=(\delta_1,\delta_2)$. We assume that $f(\delta)$ is an isolated point of $\sigma_T(f(T))$. The proof is divided into two cases.
	
	Case $(1)$: Both $\delta_1,\delta_2$ are not isolated points of $\sigma_T(T_1)$ and $\sigma_T(T_2)$, respectively. Then there exist two sequences $(\delta_{n}^1)\subseteq\sigma_T(T_1)$ and $(\delta_{n}^2)\subseteq\sigma_T(T_2)$ converging to $\delta_1$ and $\delta_2$, respectively. Consider the function $f_1(z)=f(z,\delta_2)$ defined in a neighbourhood of $\sigma_T(T_1)$. We know that the sequence $(f_1(\delta_{n_1}))\subseteq\sigma_T(f(T))$ converges to $f(\delta)$, but $f(\delta)$ is isolated point of $\sigma_T(f(T))$, this implies $f_1(\delta_{n}^1)=f(\delta),\,\forall n\in\mathbb{N}$. By \cite[Theorem 3.7, Page 78]{CONWAY}, we conclude that $f_1(z)=f(\delta)$ for all $z$ in a neighbourhood of $\delta_1$, say $U_1$.
	
	Now for fixed $z_1\in U_1$, we consider ${f_2}^{z_1}(w)=f(z_1,w)$ defined in a neighbourhood of $\sigma_T(T_2)$. By a similar argument as above, we see that ${f_2}^{z_1}(w)=f(\delta)$ for all $w$ in a neighbourhood of $\delta_2$, say $U_2$. From this, we conclude that $f(z,w)=f(\delta)$ for all $(z,w)\in U_1\times U_2$.
	
	If $\sigma_T(T)$ is connected, then $f(z,w)=f(\delta),\,\forall(z,w)\in\sigma_T(T)$ and consequently $f(T_1,T_2)=f(\delta)I$. Hence $f(\delta)$ is a joint eigenvalue with infinite multiplicity.
	
	If $\sigma_T(T)$ is disconnected, then $\sigma_T(T)$ is disjoint union of two compact sets $K_1,\,K_2$, where $K_1,\,K_2$ are connected components of $\sigma_T(T)$ and $(\delta_1,\delta_2)\in K_1$ (say). Thus $f(z,w)=f(\delta)$ for all $(z,w)\in K_1$. By Theorem \ref{thm disjoint spectra}, there exist a $T$-invariant subspace $M_1$ such that $\sigma_T(T|_{M_1})=K_1$ and consequently $f(T|_{M_1})=f(\delta)I_{M_1}$. We observe that $M_1$ is of infinite dimension, because if $M_1$ is finite dimensional, then $K_1$ is a finite set and consequently $(\delta_1,\delta_2)$ is isolated point of $\sigma_T(T)$, which is a contradiction to our assumption. Therefore $f(\delta)$ is a joint eigenvalue of $f(T)$ with infinite multiplicity.
	
	Case $(2)$: $\delta_1$ is not isolated point of $\sigma_T(T_1)$ and $\delta_2$ is an isolated point of $\sigma_T(T_2)$. By a similar argument as in Case $(1)$, we have $f(z_1,\delta_2)=f(\delta)$ for all $z_1\in V_1$, where $V_1$ is a neighbourhood of $\delta_1$. By \cite[Lemma 3.4]{UCHIYAMAp}, $\delta_2$ is an eigenvalue of $T_2$.
	
	For fixed $z_1\in V_1$, we get
	\begin{equation}\label{eqn roots}
	f(z_1,w)-f(z_1,\delta_2)=(w-\delta_2)^{m}h(z_1,w)
	\end{equation}
	for $w$ in a neighbourhood, say $V_2$ of $\delta_2$ and for some $m\in\mathbb{N}$, where $h(z_1,w)$ is analytic function in $V_2$ such that $h(z_1,\delta_2)\ne 0$. As $f(z_1,\delta_2)=f(\delta_1,\delta_2)$ for all $z_1\in V_1$, we get that Equation \ref{eqn roots} is equivalent to the following.
	\begin{equation}\label{equation of function}
	f(z_1,w)-f(\delta_1,\delta_2)=(w-\delta_2)^{m}h(z_1,w),\,\forall z_1\in V_1,\,w\in V_2.
	\end{equation}
	Now, we denote $N_1=N(T_2-\delta_2I)$. Then
	\begin{equation*}
	f(T_1|_{N_1},T_2|_{N_1})-f(\delta_1,\delta_2)I_{N_1}=h(T_1|_{N_1},T_2|_{N_1})(T_2|_{N_1}-\delta_2I_{N_1})^m.
	\end{equation*}
	In the above Equation $f(T_1|_{N_1},T_2|_{N_1})$ is well defined, because if $\sigma_T(T)$ is connected then Equation \ref{equation of function} holds true for every $(z_1,w)\in\sigma_T(T)$ and on the other hand if $\sigma(T)$ is disconnected, then $(\delta_1,\delta_2)$ belongs to some connected component of $\sigma_T(T)$, say $K_1$. Then Equation \ref{equation of function} holds for every $(z_1,w)\in K_1$. By theorem \ref{thm disjoint spectra}, if $M_1\subseteq H$ is the $T$-invariant subspace satisfying $\sigma_T(T_1|_{M_1},T_2|_{M_1})=K_1$, then $N\subseteq M_1$. Hence above equation is well defined.

	If $\delta_2$ is an eigenvalue of $T_2$ with infinite multiplicity, then $f(\delta)$ is a joint eigenvalue of $f(T)$ with infinite multiplicity, by the above equation.
	
	Now we claim that $\delta_2$ cannot be an eigenvalue with finite multiplicity. On the contrary assume that $\delta_2$ is an eigenvalue with finite multiplicity. Then \cite[Theorem 3.5]{UCHIYAMAp} implies that $T_2-\delta_2I$ is a Fredholm operator of index zero. Using \cite[(i), Page 70]{CURTO}, we see that $(T_1-z_1, T_2-\delta_2)$ is Taylor Fredholm of index zero, for every $z_1\in V_1$.
	
	By Theorem \ref{thm dash} either we have a sequence $(z_n)\subseteq V_1$ such that $(z_n,\delta_2)\subseteq\sigma_p(T)$ or we have a sequence $(\tilde{z}_n)\subseteq V_1$ such that $(\tilde{z}_n,\delta_2)\subseteq\sigma_p(T^*)^*$. Both the cases are not possible, because in the first case $\delta_2$ becomes an eigenvalue of $T_2$ with infinite multiplicity and in the second case $\bar{\delta}_2$ becomes an eigenvalue of $T_2^*$ with infinite multiplicity. Both contradicts the fact that $T_2-\delta_2I$ is a Fredholm operator.
\end{proof}
The following result can be viewed as a spectral mapping theorem for $\sigma_T(T)\setminus\pi_{00}(T)$.
\begin{proposition}\label{prop removing iso pts}
	Let $T=(T_1,T_2)$ be a commuting pair of paranormal operators in $\mathcal{B}(H)$. For every $f\in H(\sigma_T(T))$, we have $$\sigma_T(f(T))\setminus\pi_{00}(f(T))=f(\sigma_T(T)\setminus\pi_{00}(T)).$$
\end{proposition}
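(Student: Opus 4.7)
The plan is to establish the two set inclusions separately, using the Taylor spectral mapping theorem $\sigma_T(f(T))=f(\sigma_T(T))$, together with Remark \ref{rem eigenvalues} and Proposition \ref{prop not isolated} for the forward direction, and an iterated Riesz-type decomposition supplied by Theorem \ref{thm disjoint spectra} and Corollary \ref{matrix rep} for the reverse direction.

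For the forward inclusion $f(\sigma_T(T)\setminus\pi_{00}(T))\subseteq\sigma_T(f(T))\setminus\pi_{00}(f(T))$, I would fix $\delta\in\sigma_T(T)\setminus\pi_{00}(T)$, note that $f(\delta)\in\sigma_T(f(T))$ by spectral mapping, and split into two cases according to whether $\delta$ is isolated in $\sigma_T(T)$. If $\delta$ is not isolated, Proposition \ref{prop not isolated} forces $f(\delta)$ either to fail to be isolated in $\sigma_T(f(T))$ or to be a joint eigenvalue of $f(T)$ with infinite multiplicity, and each alternative rules out $f(\delta)\in\pi_{00}(f(T))$. If $\delta$ is isolated in $\sigma_T(T)$, then Lemma \ref{Lemma *isoloid} gives $\delta\in\sigma_p(T)$, and the hypothesis $\delta\notin\pi_{00}(T)$ forces $\dim\bigcap_{i=1}^{2}N(T_i-\delta_iI)=\infty$; Remark \ref{rem eigenvalues} then carries this across the functional calculus to the equality $\dim N(f(T)-f(\delta)I)=\infty$, with $f(\delta)$ isolated in $\sigma_T(f(T))$, so once more $f(\delta)\notin\pi_{00}(f(T))$.

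For the reverse inclusion $\sigma_T(f(T))\setminus\pi_{00}(f(T))\subseteq f(\sigma_T(T)\setminus\pi_{00}(T))$, I would argue by contrapositive. Assume $\mu\in\sigma_T(f(T))$ is such that every $\delta\in\sigma_T(T)$ with $f(\delta)=\mu$ already lies in $\pi_{00}(T)$; then each such $\delta$ is isolated in $\sigma_T(T)$, and the preimage $F:=f^{-1}(\mu)\cap\sigma_T(T)$ is a closed discrete subset of the compact set $\sigma_T(T)$, hence a finite set $F=\{\delta^{(1)},\ldots,\delta^{(k)}\}$. Iterating Theorem \ref{thm disjoint spectra} and Corollary \ref{matrix rep} along the partition $\sigma_T(T)=\{\delta^{(1)}\}\cup\cdots\cup\{\delta^{(k)}\}\cup K'$ with $K':=\sigma_T(T)\setminus F$, and invoking the fact that invariant restrictions of paranormal operators remain paranormal, one sees that $T$ is jointly similar to a block form whose $j$-th block equals the scalar $\delta^{(j)}I_{M_j}$ (via Lemma \ref{Lemma *isoloid}, since each scalar block has a one-point spectrum) with $M_j=\bigcap_{i=1}^{2}N(T_i-\delta^{(j)}_iI)$ finite dimensional, plus a tail block $S$ satisfying $\sigma_T(S)=K'$. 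Applying the analytic functional calculus block by block, $f(T)$ is jointly similar to $\mu I_{M_1}\oplus\cdots\oplus\mu I_{M_k}\oplus f(S)$, and since $\mu\notin f(K')=\sigma_T(f(S))$ by the choice of $F$, the point $\mu$ is isolated in $\sigma_T(f(T))$ and its joint eigenspace has dimension $\sum_{j=1}^{k}\dim M_j<\infty$, yielding $\mu\in\pi_{00}(f(T))$.

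The main obstacle I anticipate is the bookkeeping in the reverse inclusion: one must verify that the iterated pairwise splittings of Theorem \ref{thm disjoint spectra} can be assembled into a single simultaneous direct-sum decomposition of $T$ up to joint similarity; that each finite-dimensional block is genuinely scalar (which is where paranormality of invariant restrictions together with Lemma \ref{Lemma *isoloid} is essential); and that the tail block $f(S)$ contributes nothing to $N(f(T)-\mu I)$, which rests on the Taylor spectral mapping theorem and on the finiteness of $F$ forced by the compactness and discreteness argument.
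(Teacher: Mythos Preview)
Your forward inclusion is exactly the paper's argument: split on whether $\delta$ is isolated in $\sigma_T(T)$ and invoke Remark \ref{rem eigenvalues} or Proposition \ref{prop not isolated}.

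For the reverse inclusion your route diverges from the paper's. The paper argues directly, splitting into three cases according to the nature of $\lambda$ in $\sigma_T(f(T))$: (i) $\lambda$ not isolated, handled by a compactness/limit-point argument on preimages; (ii) $\lambda$ isolated but not a joint eigenvalue, where any $\delta\in\pi_{00}(T)$ with $f(\delta)=\lambda$ would force $\lambda$ to be an eigenvalue via Lemma \ref{iso eigenvalue}; and (iii) $\lambda$ an isolated joint eigenvalue of infinite multiplicity, where the finiteness of $\{\delta\in\pi_{00}(T):f(\delta)=\lambda\}$ would contradict the infinite multiplicity, so this set must accumulate at some $\delta_0\notin\pi_{00}(T)$. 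Your contrapositive packages all three cases at once: the hypothesis that every preimage of $\mu$ lies in $\pi_{00}(T)$ forces $f^{-1}(\mu)\cap\sigma_T(T)$ to be finite, and the iterated Riesz decomposition then reads off $\mu\in\pi_{00}(f(T))$ from the block form. This is a cleaner and more uniform organization; the trade-off is precisely the bookkeeping you flag, namely that at each stage one restricts to an invariant subspace (so paranormality is preserved and Lemma \ref{Lemma *isoloid} applies to make the singleton blocks scalar), that the resulting $M_j$ sits inside the finite-dimensional joint eigenspace of $\delta^{(j)}$, and that Taylor invertibility of $f(S)-\mu I$ on the tail gives $H_2=\{0\}$, so the joint kernel of $f(T)-\mu I$ is contained in $M_1\dotplus\cdots\dotplus M_k$. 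The paper sidesteps this iteration by citing Lemma \ref{iso eigenvalue} for the multiplicity count in its Case (iii), which implicitly uses the same decomposition for a single isolated point.
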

\begin{proof}
	We first prove that $f(\sigma_T(T)\setminus\pi_{00}(T))\subseteq\sigma_T(f(T))\setminus\pi_{00}(f(T))$. To see this, let $\lambda\in f(\sigma_T(T)\setminus\pi_{00}(T))$. For some $\delta=(\delta_1,\delta_2)\in\sigma_T(T)\setminus\pi_{00}(T)$, we have $f(\delta)=\lambda$.
	
	If $\delta$ is isolated point of $\sigma_T(T)$, then by Lemma \ref{Lemma *isoloid}, $\delta$ is a joint eigenvalue of $T$ and consequently $\delta$ is of infinite multiplicity. By Remark \ref{rem eigenvalues}, $\lambda$ is a joint eigenvalue of $f(T)$ with infinite multiplicity. Hence $\lambda\in\sigma_T(f(T))\setminus\pi_{00}(f(T))$.
	
	On the other hand, if $\delta$ is not isolated point of $\sigma_T(T)$, then by Proposition \ref{prop not isolated}, it follows that $\lambda\in\sigma_T(f(T))\setminus\pi_{00}(f(T))$.
	
	Our next objective is to show that $\sigma_T(f(T))\setminus\pi_{00}(f(T))\subseteq f(\sigma_T(T)\setminus\pi_{00}(T)).$ Let $\lambda\in\sigma_T(f(T))\setminus\pi_{00}(f(T))$. We prove the result by the following cases.
	
	Case $(1)$: $\lambda$ is not isolated point of $\sigma_T(f(T))$:\\
	 Then there exist a sequence $(\lambda_n)\subseteq\sigma_T(f(T))$ such that $(\lambda_n)$ converges to $\lambda$. As $(\lambda_n)\subseteq\sigma_T(f(T))=f(\sigma_T(T))$, we get $(\lambda_n)=(f(\delta_n))$ for some $(\delta_n)\subseteq\sigma_T(T)$. Using compactness of $\sigma_T(T)$, we obtain a subsequence $(\delta_{n_k})$ of $(\delta_n)$ converging to some $\delta_0\in\sigma_T(T)$. We know $(\lambda_{n_k})=(f(\delta_{n_k}))$ converges to $f(\delta_0)$, thus $\lambda=f(\delta_0)\in f(\sigma_T(T)\setminus\pi_{00}(T))$.
	
	Case $(2)$: $\lambda$ is an isolated point of $\sigma_T(f(T))$ but not a joint eigenvalue of $f(T)$:\\
	 We know that $\lambda\in \sigma_T(f(T))=f(\sigma_T(T))$, consequently $\lambda=f(\delta)$ for some $\delta\in \sigma_T(T)$. By Lemma \ref{iso eigenvalue}, if $\delta\in\pi_{00}(T)$, then $f(\delta)\in\pi_{00}(f(T))$, but $f(\delta)$ is not a joint eigenvalue of $f(T)$. As a consequence there exist a $\delta\in\sigma_T(T)\setminus\pi_{00}(T)$ such that $\lambda=f(\delta)$.
	
	Case $(3)$: $\lambda$ is an isolated joint eigenvalue of $f(T)$ with infinite multiplicity:\\
	 If $\lambda=f(\delta)$ for some $\delta\in\sigma_T(T)\setminus\pi_{00}(T)$, then we are done. Otherwise, consider the set $B:=\{\delta\in\pi_{00}(T):\lambda=f(\delta)\}$. We assert that $B$ is not a finite set.
	
	On the contrary, if the set is finite and equal to $\{\delta_1,\delta_2,\ldots\delta_{n_0}\}$ for some fixed $n_0\in\mathbb{N}$, then Lemma \ref{iso eigenvalue} implies that $\lambda$ is a joint eigenvalue of $f(T)$ with finite multiplicity, which is equal to the dimension of $\underset{i=1}{\overset{n_0}{\cup}}N(T-\delta_iI)$. This is a contradiction to our assumption that $\lambda$ is a joint eigenvalue of $f(T)$ with infinite multiplicity. This proves our assertion that $B$ is not finite. Thus the set $B$ has at least one limit point, say $\delta_0\in\sigma_T(T)$. Hence $\delta_0\in\sigma_T(T)\setminus\pi_{00}(T)$ and it satisfy $\lambda=f(\delta)$. This contradicts our assumption that every $\delta$ satisfying $\lambda=f(\delta)$ belongs to $\pi_{00}(T)$.
\end{proof}
Now we discuss the spectral mapping theorem for the joint Weyl spectrum.
\begin{theorem}\label{Weyl spectrum}
	Let $T=(T_1,T_2)$ be a commuting pair of paranormal operators in $\mathcal{B}(H)$. Then for every $f\in H(\sigma_T(T))$ we have $f(\omega(T))=\omega(f(T))$.
\end{theorem}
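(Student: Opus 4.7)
The plan is to reduce the statement to a Weyl-type identity for the single operator $f(T)$ and then prove this identity by two inclusions using the structure of paranormal joint eigenvalues. Since Theorem \ref{Weyls thm} gives $\omega(T)=\sigma_T(T)\setminus\pi_{00}(T)$ and Proposition \ref{prop removing iso pts} yields $f(\sigma_T(T)\setminus\pi_{00}(T))=\sigma_T(f(T))\setminus\pi_{00}(f(T))$, combining these gives
\begin{equation*}
f(\omega(T))=\sigma_T(f(T))\setminus\pi_{00}(f(T)).
\end{equation*}
Hence the theorem follows once we establish $\omega(f(T))=\sigma_T(f(T))\setminus\pi_{00}(f(T))$.

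For $\omega(f(T))\subseteq\sigma_T(f(T))\setminus\pi_{00}(f(T))$, equivalently $\pi_{00}(f(T))\cap\omega(f(T))=\emptyset$, let $\mu\in\pi_{00}(f(T))$. Proposition \ref{prop removing iso pts} forces every preimage $\delta\in f^{-1}(\mu)\cap\sigma_T(T)$ to lie in $\pi_{00}(T)$, and since each such $\delta$ is isolated in the compact set $\sigma_T(T)$, the preimage $f^{-1}(\mu)\cap\sigma_T(T)=\{\delta_1,\ldots,\delta_k\}$ is finite. Iterating Theorem \ref{thm disjoint spectra} and Corollary \ref{matrix rep}, $T$ is jointly similar to $T|_{M_1}\oplus\cdots\oplus T|_{M_k}\oplus T|_{N_0}$, where $M_i$ is the finite-dimensional joint eigenspace at $\delta_i$ (by Lemma \ref{Lemma *isoloid}) and $\sigma_T(T|_{N_0})=\sigma_T(T)\setminus\{\delta_1,\ldots,\delta_k\}$. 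By functional calculus $f(T|_{M_i})=\mu I_{M_i}$, while $\mu\notin f(\sigma_T(T|_{N_0}))=\sigma(f(T|_{N_0}))$. Replacing the $\mu I_{M_i}$ blocks by $\alpha I_{M_i}$ for a scalar $\alpha\notin\{\mu\}\cup\sigma(f(T|_{N_0}))$ yields a finite-rank, hence compact, operator $K$ with $f(T)+K-\mu I$ invertible; in particular $\mu\notin\omega(f(T))$.

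For the reverse inclusion $\sigma_T(f(T))\setminus\pi_{00}(f(T))\subseteq\omega(f(T))$, take $\mu=f(\lambda)$ with $\lambda\in\omega(T)$, and assume for contradiction that $\mu\notin\omega(f(T))$, so there exists a compact $K'$ with $f(T)+K'-\mu I$ invertible. If $\lambda$ is isolated in $\sigma_T(T)$, Lemma \ref{Lemma *isoloid} gives $\lambda\in\sigma_p(T)$; since $\lambda\notin\pi_{00}(T)$, the joint eigenspace $N(T_1-\lambda_1I)\cap N(T_2-\lambda_2I)$ is infinite-dimensional, and Remark \ref{rem eigenvalues} then makes $N(f(T)-\mu I)$ infinite-dimensional, contradicting the Fredholmness of $f(T)-\mu I$ forced by the existence of $K'$. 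If $\lambda$ is not isolated in $\sigma_T(T)$, take a sequence of distinct points $\lambda_n\to\lambda$ in $\sigma_T(T)$; by Theorem \ref{thm dash} and after passing to a subsequence we may assume either $\lambda_n\in\sigma_p(T)$ or $\lambda_n\in\sigma_p(T^*)^*$. In the first case, paranormality gives $(T_i-\lambda_n^{(i)}I)^*x_n=0$ whenever $(T_i-\lambda_n^{(i)}I)x_n=0$, so joint eigenvectors at the distinct $\lambda_n$ are orthogonal, producing an orthonormal sequence $(x_n)$ of joint eigenvectors. Then $f(T)x_n=f(\lambda_n)x_n$ gives $(f(T)-\mu I)x_n\to 0$ in norm, and compactness of $K'$ together with $x_n\rightharpoonup 0$ yields $K'x_n\to 0$; hence $(f(T)+K'-\mu I)x_n\to 0$, and applying the bounded inverse forces $x_n\to 0$, contradicting $\|x_n\|=1$. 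The case $\lambda_n\in\sigma_p(T^*)^*$ is handled symmetrically by passing to adjoints via $f(T)^*=g(T^*)$ with $g(w)=\overline{f(\bar w)}$ and using compactness of $(K')^*$.

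The main obstacle is the non-isolated sub-case above: since $f(T)$ need not be paranormal, Weyl's theorem cannot be invoked directly for $f(T)$. The key idea is to push joint eigenvectors of $T$ at nearby parameters through the analytic functional calculus, exploiting the note following Lemma \ref{lemma eigenvalue} that $Tx=\lambda x$ implies $f(T)x=f(\lambda)x$, to manufacture approximate eigenvectors of $f(T)$ at $\mu$. This transfers the Uchiyama-type orthogonality argument from the proof of Theorem \ref{Weyls thm} to the $f(T)$ side without requiring $f(T)$ itself to be paranormal.
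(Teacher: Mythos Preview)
Your reduction to proving $\omega(f(T))=\sigma_T(f(T))\setminus\pi_{00}(f(T))$ and your argument for the inclusion $\pi_{00}(f(T))\cap\omega(f(T))=\emptyset$ are fine and essentially match the paper's treatment. The problem lies in the reverse inclusion, specifically in the non-isolated sub-case.

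You pick $\lambda\in\omega(T)$ non-isolated, choose $\lambda_n\to\lambda$ in $\sigma_T(T)$, and then write ``by Theorem~\ref{thm dash} and after passing to a subsequence we may assume either $\lambda_n\in\sigma_p(T)$ or $\lambda_n\in\sigma_p(T^*)^*$.'' Dash's theorem only says $\sigma_T(T)=\sigma_{T_e}(T)\cup\sigma_p(T)\cup\sigma_p(T^*)^*$; it gives no reason to exclude $\lambda_n\in\sigma_{T_e}(T)$. In the proof of Theorem~\ref{Weyls thm} that you are imitating, this works because there the $\lambda_n$ are taken in $\sigma_T(T)\setminus\omega(T)$, and $\sigma_{T_e}(T)\subseteq\sigma_{T_W}(T)\subseteq\omega(T)$ forces them out of the essential spectrum. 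In your situation the $\lambda_n$ are merely nearby points of $\sigma_T(T)$ approaching a point of $\omega(T)$, so nothing prevents a subsequence (or all of them) from sitting in $\sigma_{T_e}(T)$, and your approximate-eigenvector construction never gets off the ground. A second, smaller issue: the adjoint case you sketch (``handled symmetrically by passing to $f(T)^*=g(T^*)$'') would require orthogonality of joint eigenvectors of $T^*$ at distinct eigenvalues, and $T^*$ need not be paranormal, so this symmetry is not automatic.

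The paper avoids this entirely: for $\delta\in\omega(T)$ non-isolated it invokes Proposition~\ref{prop not isolated} directly to conclude that $f(\delta)$ is either non-isolated in $\sigma_T(f(T))$ or an eigenvalue of $f(T)$ of infinite multiplicity, and from this lands in $\sigma_{T_W}(f(T))\subseteq\omega(f(T))$. In other words, the heavy lifting for the non-isolated case was already absorbed into Proposition~\ref{prop not isolated}; you should appeal to it rather than attempt to re-run the approximate-eigenvalue argument on the $f(T)$ side. If you insist on your route, you must separately dispose of the possibility $\lambda\in\sigma_{T_e}(T)$, for instance via a spectral mapping theorem for the Taylor essential spectrum to get $f(\lambda)\in\sigma_{T_e}(f(T))\subseteq\omega(f(T))$, but that is an additional ingredient not supplied in the paper.
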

\begin{proof}
	Consider $\lambda\in\omega (f(T))$. We have to show that $\lambda\in f(\omega(T))$. By Theorem \ref{Weyls thm} and Proposition \ref{prop removing iso pts}, it is equivalent to show that $\lambda\in f(\sigma_T(T)\setminus\pi_{00}(T))=\sigma_T(f(T))\setminus\pi_{00}(f(T))$.
	
	On the contrary, we assume that $\lambda\in\pi_{00}(f(T))$. By Theorem \ref{thm disjoint spectra} there exist two subspaces $M,\,N\subseteq H$ invariant under $f(T)$ such that $\sigma_T(f(T)|_M)=\{\lambda\}$ and $\sigma_T(f(T)|_N)=\sigma_T(f(T))\setminus\{\lambda\}$. We know that $f(T)$ is similar to
\[
\begin{blockarray}{ccc}
M & N \\
\begin{block}{(cc)c}
 f(T)|_M& 0 &M\\
 0& B&N \\
\end{block}
\end{blockarray}, \text{ where }B\in\mathcal{B}(M^{\perp}).
\]
	Note that $M$ is finite dimensional, otherwise $\lambda\notin\pi_{00}(f(T))$.
	
	If $\lambda\ne 0$, then $f(T)-f(T)P_M$ is similar to
	\[
	\begin{bmatrix}
	0&0\\
	0&B
	\end{bmatrix}.
	\]
	It is easy to see that $\lambda\notin\sigma_T(f(T)-f(T)P_M)$.
	
	If $\lambda=0$, then $f(T)+P_M$ is similar to
	\[
	\begin{bmatrix}
	I_M&0\\
	0&B
	\end{bmatrix}.
	\]
	Thus $\lambda=0\notin\sigma_T(f(T+P_M))$.
	
	We know that $f(T)P_M$ and $P_M$ are finite rank operators, hence compact. This implies that $\lambda\notin\omega(f(T))$, which is a contradiction to our assumption.
	
	To get the reverse containment, let $\lambda\in f(\omega(T))$. Then there exist some $\delta\in\omega(T)$ such that $\lambda=f(\delta)$.
	
	If $\delta$ is an isolated point of $\sigma_T(T)$, then Lemma \ref{Lemma *isoloid} implies that $\delta$ is an isolated joint eigenvalue with infinite multiplicity. By Remark \ref{rem eigenvalues}, it follows that $\lambda$ is a joint eigenvalue of $f(T)$ with infinite multiplicity. From this we deduce that $\lambda\in\sigma_{T_W}(f(T))\subseteq \omega(f(T))$.
	
	If $\delta$ is not isolated point of $\sigma_T(T)$, then Proposition \ref{prop not isolated} shows that $f(T)-\lambda I$ is not Fredholm, hence $\lambda\in\sigma_{T_W}(f(T))\subseteq\omega(f(T))$.
\end{proof}
Summarizing the above results, we have the following Weyl's theorem for the analytic functional calculus for a commuting pair of paranormal operators.
\begin{theorem}
	Let $T=(T_1,T_2)$ be a commuting pair of paranormal operators in $\mathcal{B}(H)$. Then $f(T)$ satisfy Weyl's theorem-II for every $f\in H(\sigma_T(T))$. That is $$\sigma_T(f(T))\setminus\pi_{00}(f(T))=\omega(f(T)).$$
\end{theorem}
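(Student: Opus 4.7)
The plan is to derive the identity $\sigma_T(f(T))\setminus\pi_{00}(f(T))=\omega(f(T))$ by chaining together the three preceding results, since all the substantive technical work has already been carried out in Theorem \ref{Weyls thm}, Proposition \ref{prop removing iso pts}, and Theorem \ref{Weyl spectrum}.

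First I would invoke Theorem \ref{Weyls thm}, which establishes Weyl's theorem-II for the pair $T$ itself: $\sigma_T(T)\setminus\omega(T)=\pi_{00}(T)$. Since $\pi_{00}(T)$ and $\omega(T)$ are both subsets of $\sigma_T(T)$, this statement is equivalent to the assertion that $\sigma_T(T)$ is the disjoint union $\pi_{00}(T)\sqcup\omega(T)$; in particular,
\begin{equation*}
\sigma_T(T)\setminus\pi_{00}(T)=\omega(T).
\end{equation*}

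Next I would apply $f$ to both sides. The left-hand side is transformed via Proposition \ref{prop removing iso pts}, which yields
\begin{equation*}
f\bigl(\sigma_T(T)\setminus\pi_{00}(T)\bigr)=\sigma_T(f(T))\setminus\pi_{00}(f(T)),
\end{equation*}
while the right-hand side is transformed via Theorem \ref{Weyl spectrum}, giving $f(\omega(T))=\omega(f(T))$. Concatenating these two equalities produces the desired identity. To conclude, I would note that $\pi_{00}(f(T))$ and $\omega(f(T))$ are disjoint subsets of $\sigma_T(f(T))$ (this is immediate from the derived equality), so the displayed identity is indeed equivalent to Weyl's theorem-II $\sigma_T(f(T))\setminus\omega(f(T))=\pi_{00}(f(T))$ for $f(T)$.

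The main obstacle is effectively absent at this stage: the hard steps were Proposition \ref{prop not isolated}, which controls how $f$ transports non-isolated points of $\sigma_T(T)$ either to non-isolated points of $\sigma_T(f(T))$ or to joint eigenvalues of infinite multiplicity, the spectral mapping Proposition \ref{prop removing iso pts} for $\sigma_T(T)\setminus\pi_{00}(T)$ that it enables, and the spectral mapping Theorem \ref{Weyl spectrum} for the joint Weyl spectrum. Once these tools are in place, the theorem is merely a short synthesis and no further computation is needed.
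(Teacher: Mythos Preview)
Your proposal is correct and follows essentially the same route as the paper's own proof: the paper simply writes the chain $\sigma_T(f(T))\setminus\pi_{00}(f(T))=f(\sigma_T(T)\setminus\pi_{00}(T))=f(\omega(T))=\omega(f(T))$, citing Proposition~\ref{prop removing iso pts} (mislabeled in the paper), Theorem~\ref{Weyls thm}, and Theorem~\ref{Weyl spectrum} for the three equalities. Your version is somewhat more explicit in justifying the equivalence $\sigma_T(T)\setminus\pi_{00}(T)=\omega(T)$ from Theorem~\ref{Weyls thm} and in noting why the resulting identity is equivalent to the standard formulation of Weyl's theorem-II, but the argument is the same.
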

\begin{proof}
	By Proposition \ref{rem eigenvalues} and Theorem \ref{Weyl spectrum}, we have the following.
	\begin{align*}
	\sigma_T(f(T))\setminus\pi_{00}(f(T))=&f(\sigma_T(T)\setminus\pi_{00}(T))\\
	=&f(\omega(T))\\
	=&\omega(f(T)).
	\end{align*}
	This proves our result.
\end{proof}
\bibliographystyle{amsplain}

\end{document}